\newtheorem{theorem}{Theorem}
\newtheorem*{thm}{Theorem}
\newtheorem{proposition}{Proposition}
\theoremstyle{plain}
\newtheorem*{definition}{Definition}
\theoremstyle{plain}
\begin{document}

\title[]{Graph curvature via resistance distance}

\author[]{Karel Devriendt}
\address[]{Max Planck Institute for Mathematics in the Sciences, Inselstrasse 22, 04103 Leipzig, Germany}
\email{karel.devriendt@mis.mpg.de}

\author[]{Andrea Ottolini}
\address[]{Department of Mathematics, University of Washington, Seattle, WA 98195, USA}
\email{ottolini@uw.edu}

\author[]{Stefan Steinerberger}
\address[]{Department of Mathematics, University of Washington, Seattle, WA 98195, USA}
\email{steinerb@uw.edu}

\subjclass[2010]{60C05, 05C99, 31C20, 91A80.} 
\keywords{Graph, Curvature, Bonnet-Myers theorem, Lichnerowicz inequality, diameter, mixing time, commute time, Kirchhoff index, resistance, total resistance.}
\thanks{S.S. was partially supported by the NSF (DMS-2123224) and the Alfred P. Sloan Foundation.}

\begin{abstract}
    Let $G=(V,E)$ be a finite, combinatorial graph. We define a notion of curvature on the vertices $V$ via the inverse of the resistance distance matrix. We prove that this notion of curvature has a number of desirable properties. Graphs with curvature bounded from below by $K>0$ have diameter bounded from above. The Laplacian $L=D-A$ satisfies a Lichnerowicz estimate, there is a spectral gap $\lambda_2 \geq 2K$. We obtain matching two-sided bounds on the maximal commute time between any two vertices in terms of $|E| \cdot |V|^{-1} \cdot K^{-1}$. Moreover, we derive quantitative rates for the mixing time of the corresponding Markov chain and prove a general equilibrium result. 
\end{abstract}

\maketitle

\vspace{20pt}

\section{Introduction}

\subsection{Introduction} 
Curvature is a fundamental notion in differential geometry, geometric analysis and probability theory. There has been substantial interest in trying to explore such notions on combinatorial graphs. The purpose of this paper is to introduce a new definition based on the idea of using effective resistance as a distance. We then show that this notion has a large number of desirable properties.
Let $G=(V,E)$ be a connected graph on $n$ vertices. Our definition will be using the resistance matrix $\Omega \in \mathbb{R}^{n \times n}$. A descriptive way of representing $\Omega$ is 
$$\mbox{commute time between}~v_i~\mbox{and}~v_j = 2 \cdot |E| \cdot \Omega_{ij},$$
where the commute time is the expected number of steps a random walk starting in $v_i$ needs to travel to $v_j$ and then return to $v_i$. We note that $\Omega_{ij}$ is a metric on the vertices: it is non-negative, $\Omega_{ii} = 0$, and $\Omega_{ik} \leq \Omega_{ij} + \Omega_{jk}$ (see \cite{klein}).

 \begin{center}
\begin{figure}[h!]
\begin{tikzpicture}[scale=1]
\node at (0,-3) {\includegraphics[width=0.28\textwidth]{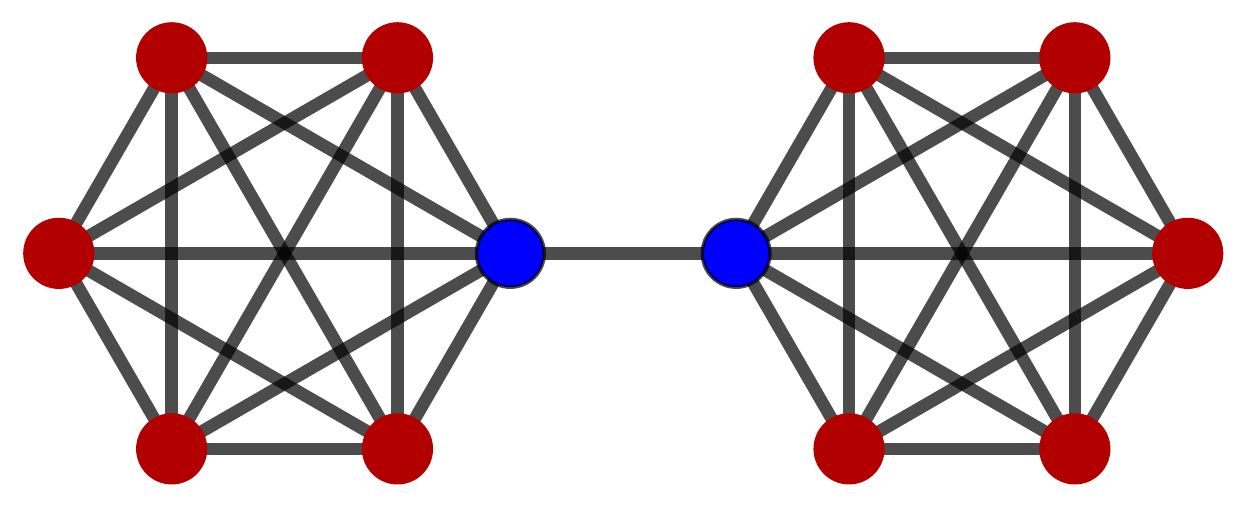}};
\node at (4,-3) {\includegraphics[width=0.28\textwidth]{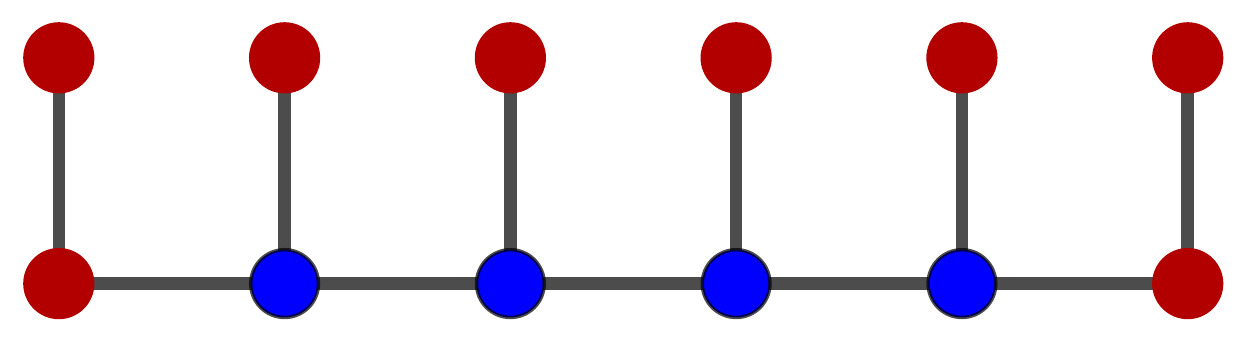}};
\node at (8,-3) {\includegraphics[width=0.22\textwidth]{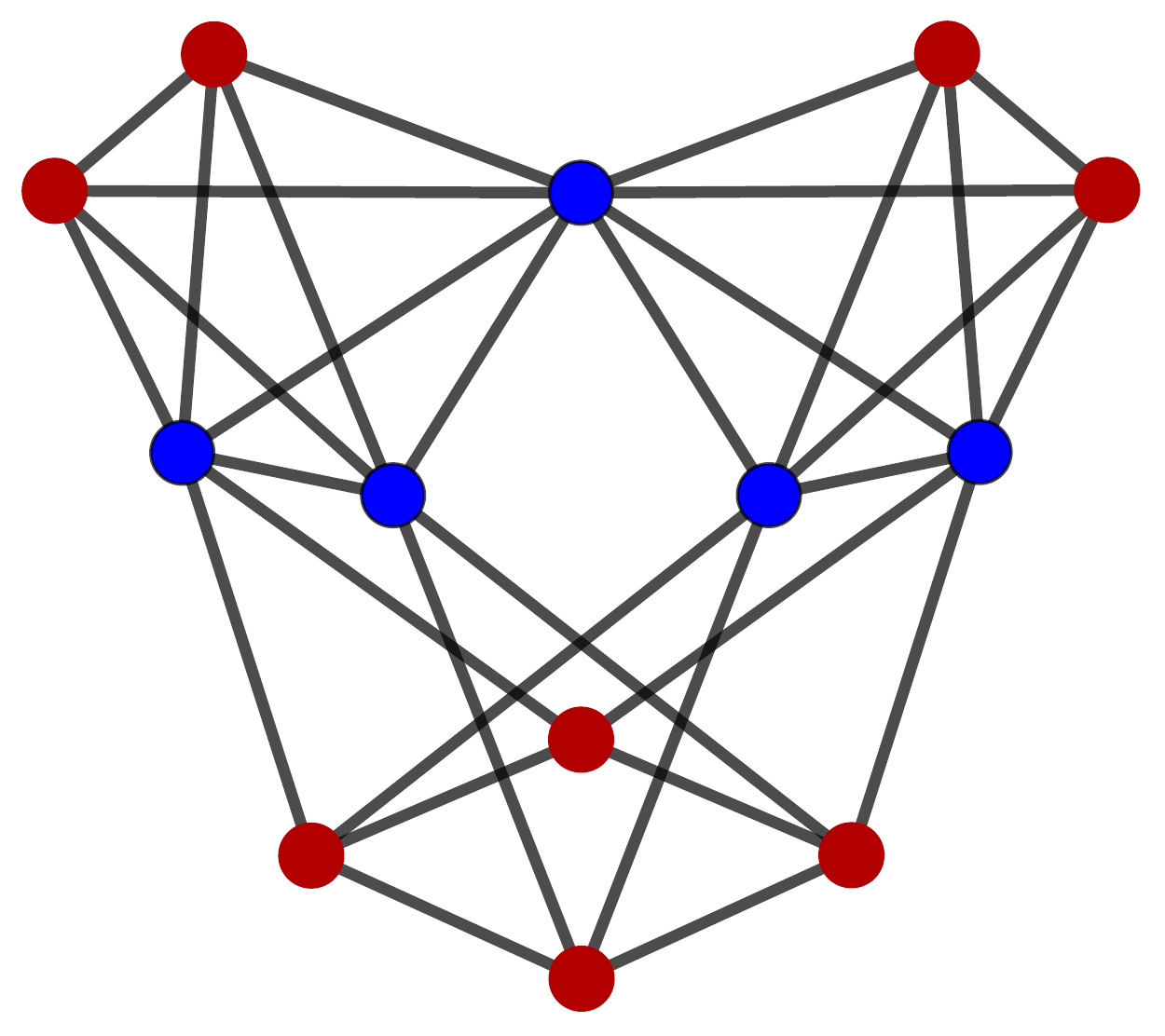}};
\end{tikzpicture}
\vspace{-10pt}
\caption{Vertices of graphs colored by the sign of the resistance curvature (red if positive, blue if negative).}
\end{figure}
\end{center}
A more formal definition is as follows: consider the matrix
$$  \Gamma = D-A + \frac{1}{n} \in \mathbb{R}^{n \times n},$$
where $L = D-A$ is the standard Kirchhoff-Laplacian and $1/n$ is a matrix all of whose entries are $1/n$. Then
$$ \Omega_{ij} = (\Gamma^{-1})_{ii} + (\Gamma^{-1})_{jj} - 2 (\Gamma^{-1})_{ij}.$$
In addition to the probabilistic interpretation hinted at earlier, the matrix $\Omega$ is also related to the electrical network interpretation of the graph, the covariance structure of its discrete Gaussian Free Field, and the uniform distribution on the set of its spanning trees. We refer to \cite{lyons} and references therein for an overview.
\begin{definition}
We define the curvature $\kappa_i \in \mathbb{R}$ in the vertex $v_i \in V$ by requiring that the vector $\kappa = (\kappa_1, \dots, \kappa_n) \in \mathbb{R}^n$ solves the linear system of equations
$$ \Omega \kappa = \mathbf{1} \qquad \mbox{where} \quad \mathbf{1} = (1,1,\dots, 1).$$
\end{definition}
It follows from classical results of Sch\"onberg \cite{bapat, xiao} that the matrix $\Omega$ is invertible. In particular, the equation $ \Omega \kappa = \mathbf{1}$ has a unique solution.  
This definition is very close to two existing definitions. The first author and Lambiotte \cite{karel} recently introduced a notion of resistance curvature via
$$ \kappa = \frac{\Omega^{-1} \mathbf{1}}{\left\langle \mathbf{1}, \Omega^{-1} \mathbf{1} \right\rangle}$$
which coincides with our definition up to a global multiplicative factor $\left\langle \mathbf{1}, \Omega^{-1} \mathbf{1} \right\rangle$. This multiplicative factor leads to substantial differences: for example, if a graph has constant curvature $K$ in our definition, then $\left\langle \mathbf{1}, \Omega^{-1} \mathbf{1} \right\rangle = nK$ and it has constant Devriendt-Lambiotte curvature $1/n$. Indeed, all graphs with constant Devriendt-Lambiotte curvature have Devriendt-Lambiotte curvature $1/n$. The second related definition is the curvature definition $\mbox{Dist}\cdot \kappa = \mathbf{1} \cdot |V|$ where $\mbox{Dist}_{ij} = d(v_i, v_j)$ is the distance matrix, which was proposed by the third author \cite{stein0} and has a large number of desirable properties. However, the distance matrix is only loosely related to the underlying random walk and the distance curvature does not control hitting or commute times. The use of the effective resistance allows for some useful bounds in this direction (see Theorem 3 and Theorem 4 below).

 \begin{center}
\begin{figure}[h!]
\begin{tikzpicture}[scale=1]
\node at (0,0) {\includegraphics[width=0.16\textwidth]{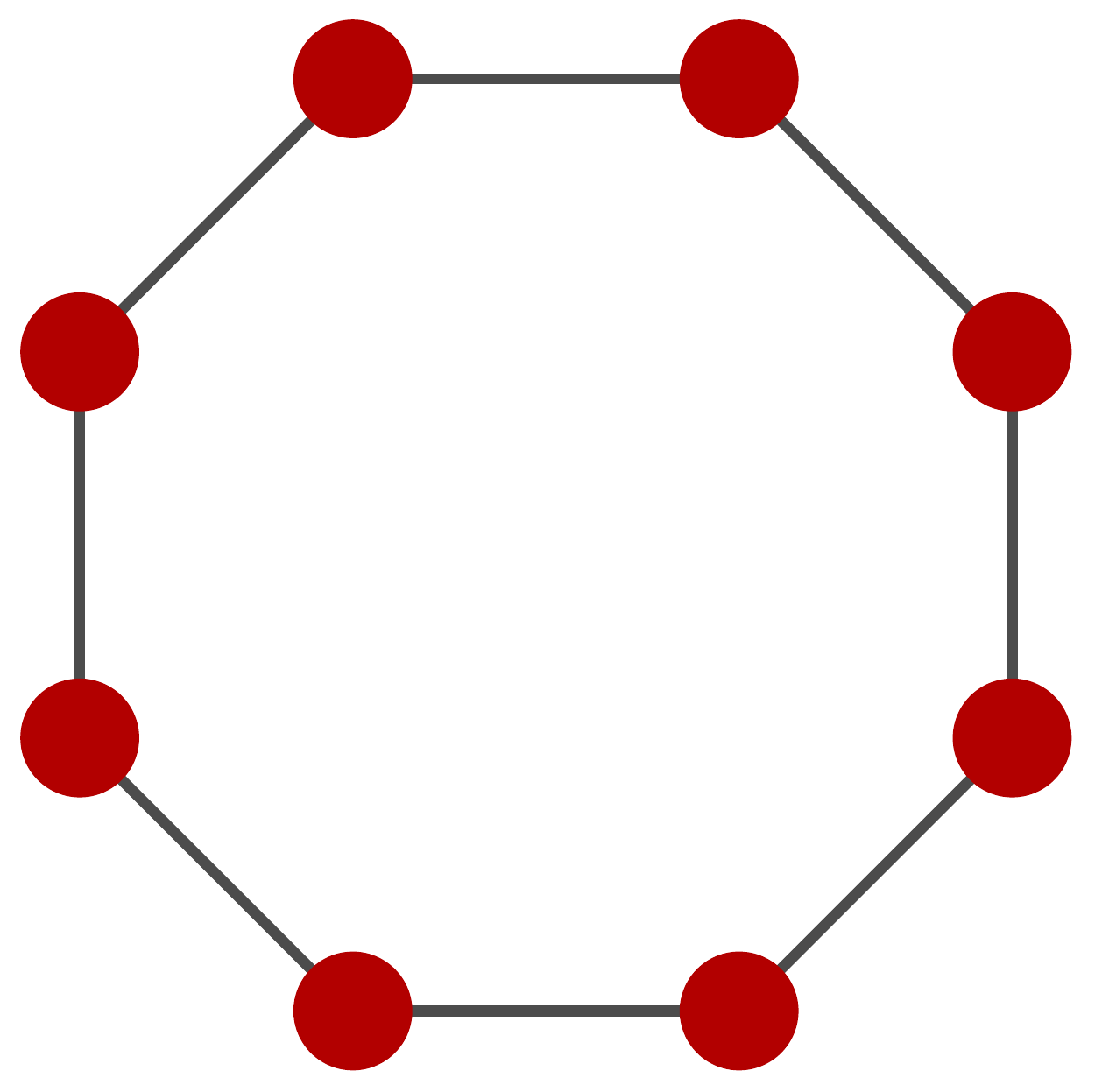}};
\node at (0, -1.7) {$K= \frac{2}{21} \sim 0.1$};
\node at (6,0) {\includegraphics[width=0.16\textwidth]{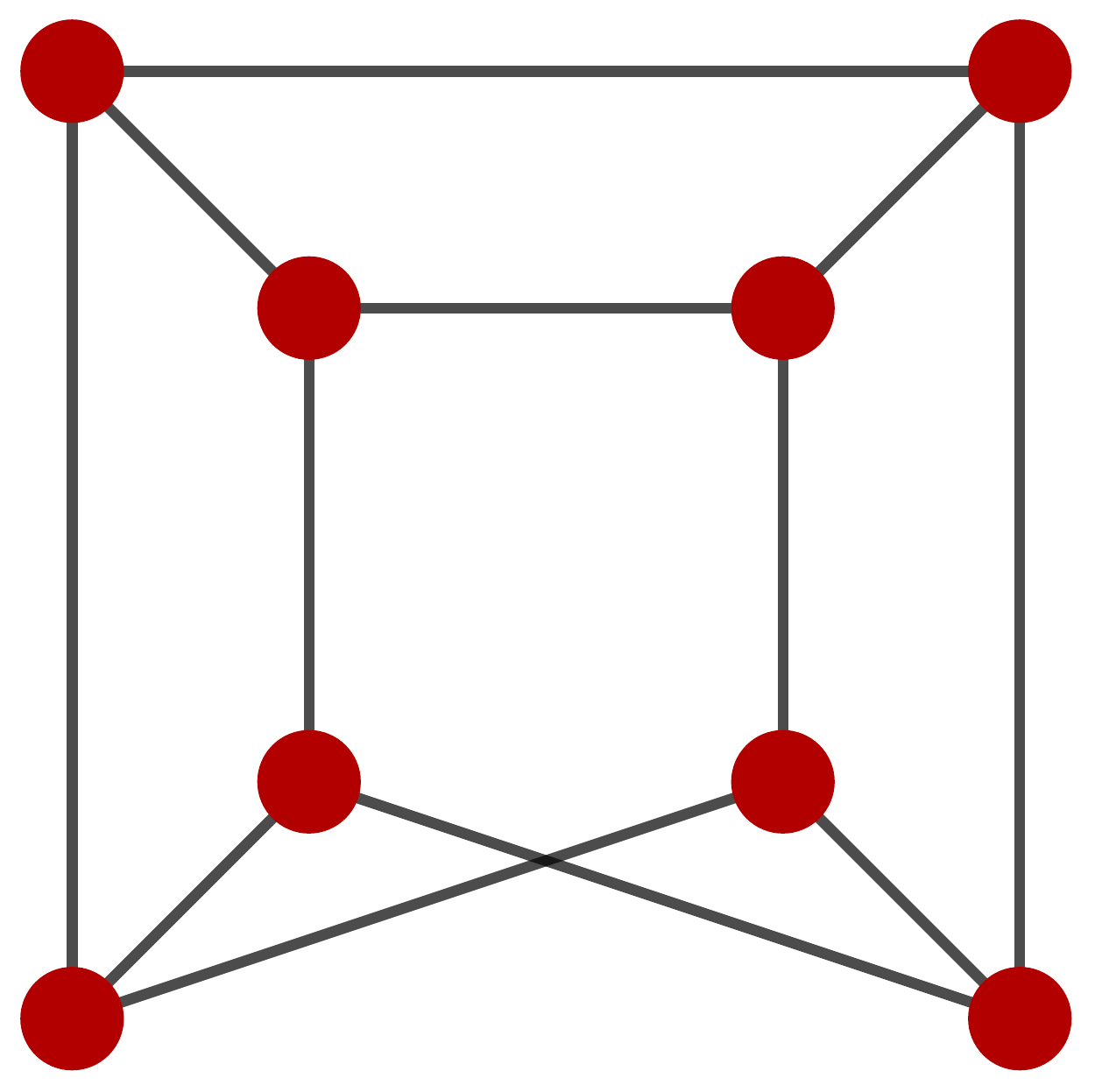}};
\node at (6, -1.7) {$K= \frac{14}{67} \sim 0.208$};
\node at (3,0) {\includegraphics[width=0.16\textwidth]{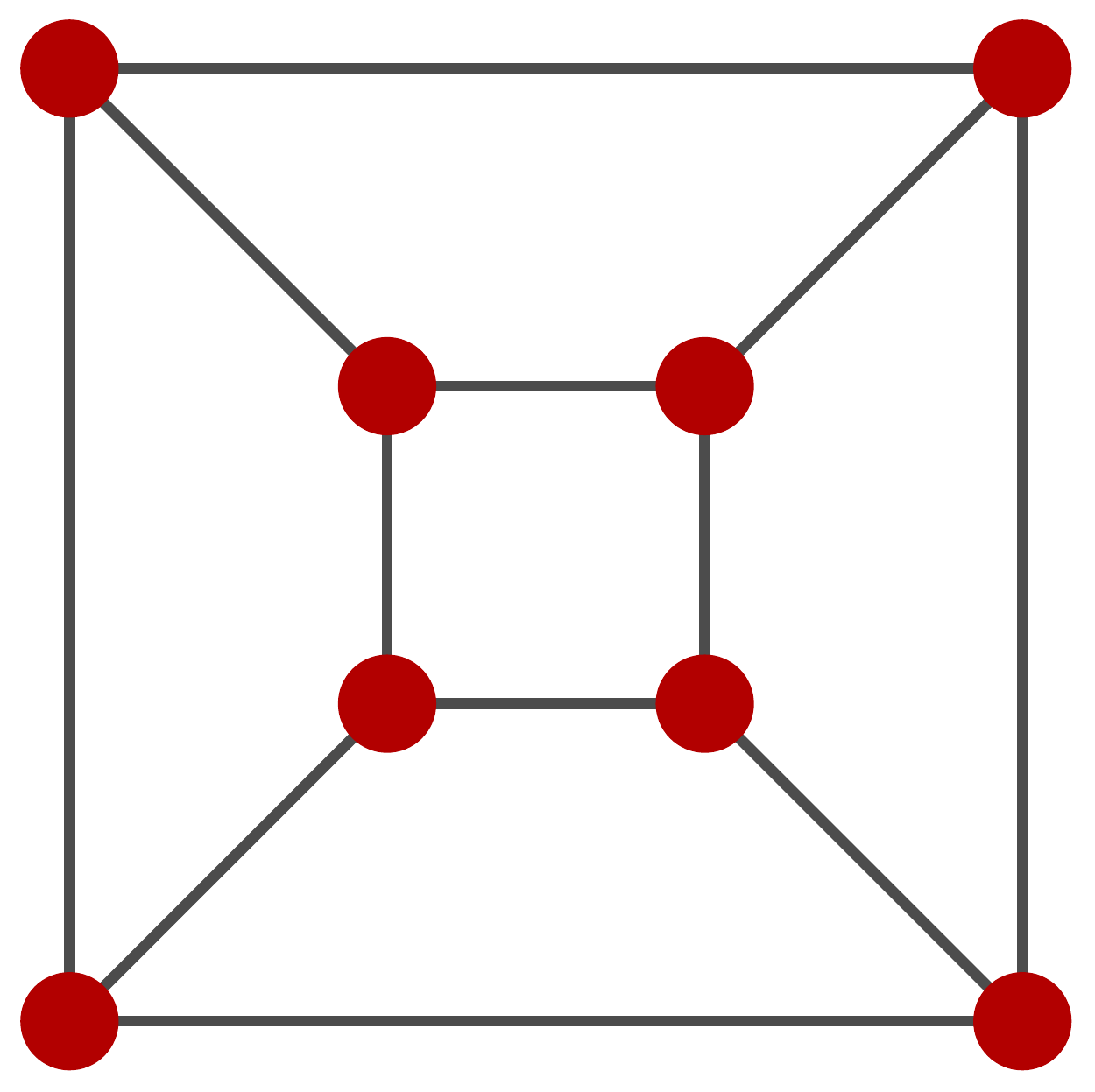}};
\node at (3, -1.7) {$K= \frac{6}{29} \sim 0.206$};
\node at (9,0) {\includegraphics[width=0.16\textwidth]{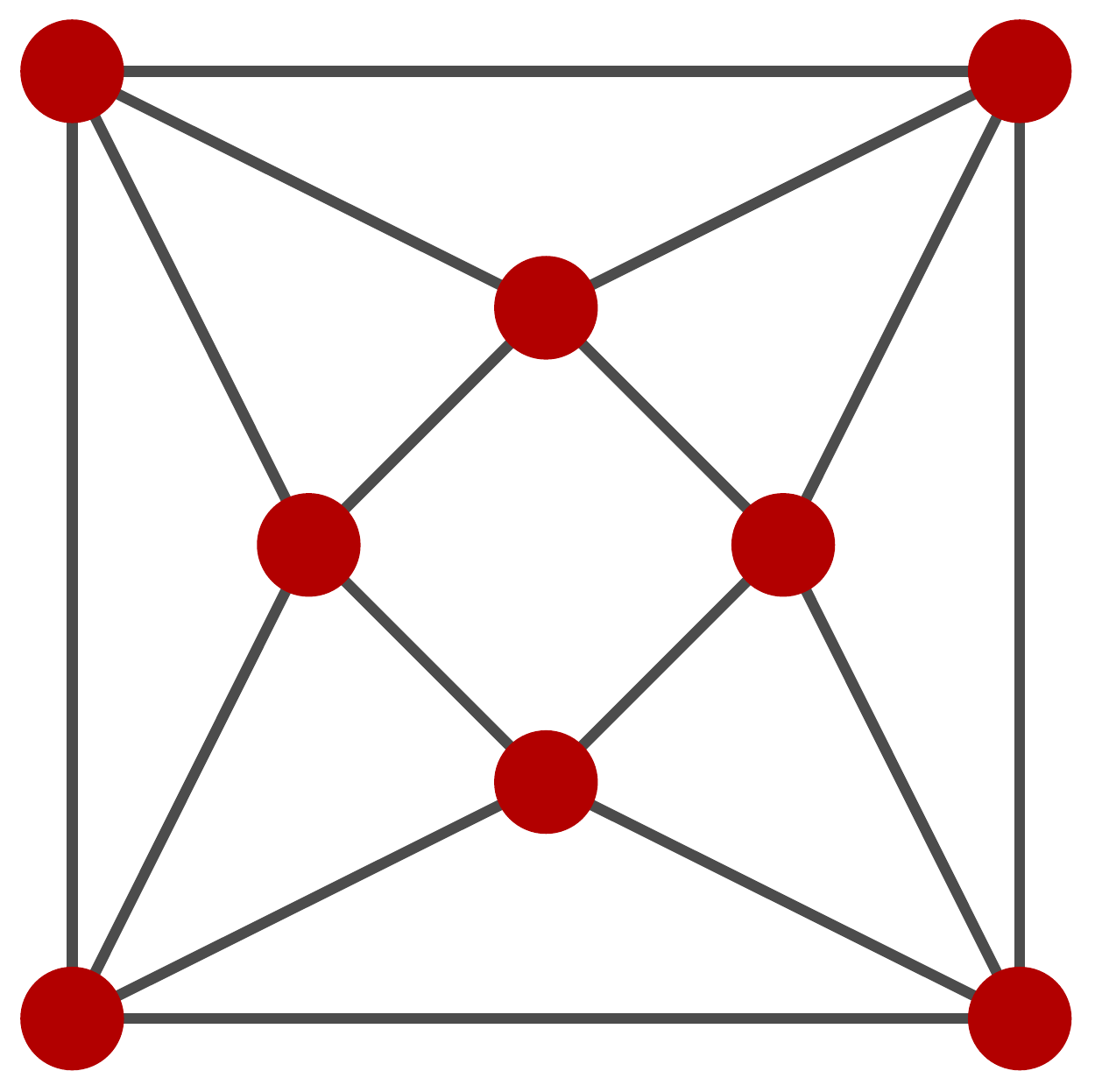}};
\node at (9, -1.7) {$K= \frac{42}{145} \sim 0.289$};
\end{tikzpicture}
\caption{Graphs with $\#V = 8$ and constant curvature: the cycle $C_8$, the cube $Q_3$, the Wagner Graph and Antiprism$_4$. As curvature increases the average commute time between vertices decreases.}
\end{figure}
\end{center}

\textbf{Examples.} We conclude with some examples. They will also be used to show that certain results are sharp. Details for how to derive these results are given in \S 3.1.

\begin{enumerate}
    \item The complete graph $K_n$, for $n\geq 2$, has constant resistance curvature $n/(2n-2)$. Moreover, if $G$ is a graph on $n$ vertices with curvature bounded from below by $K>0$, then $K \leq n/(2n-2)$ making the complete graph $K_n$ the most curved graph on $n$ vertices.
    \item The cycle graph $C_n$ has constant curvature $K_n =6/(n^2-1)$. We believe that among all graphs on $n$ vertices with constant curvature, the minimal curvature is attained by the cycle graph. We prove that if $G$ is a graph with constant curvature $K$ on $n$ vertices, then $K\geq 1/(n(n-1))$. 
    \item The hypercube $Q_n$ with $V = \left\{0,1\right\}^n$ and $E$ given by all pairs of vertices at Hamming distance 1 has constant curvature $K_n = (1+o(1)) \cdot n \cdot 2^{-n-1}.$
    \item The $d$-dimensional discrete tori $C_{n,d}$, defined on $V = \left\{1,2,\dots, n\right\}^d$ with vertices connected in the usual toroidal structure, have constant curvature 
    \begin{align*}
    K_n(2)=\Theta\left(\frac{1}{n^2\ln n}\right), \qquad K_n(d)=\Theta_d\left(\frac{1}{n^d}\right), d\geq 3,
    \end{align*}
    where $f_n=\Theta\left(g_n\right)$ means $c^{-1}\leq f_n/g_n\leq c$ for some $c>0$.
\end{enumerate}

\section{Results}
\subsection{Diameter.} One usually asks of curvature that it provides
some sort of control on the size of graph. Graphs with gigantic diameter should not have very large positive curvature everywhere. In the continuous setting, this is known as the Bonnet-Myers theorem \cite{myers}. Various notions of graph curvature have such a result.

\begin{theorem}
Let $G=(V,E)$ be a connected graph with maximal degree $\Delta$ and resistance curvature bounded from below by $K>0$. Then
$$ \emph{diam}(G) \leq \left\lceil \sqrt{\frac{\Delta}{K}} \cdot \log |V| \right\rceil$$
\end{theorem}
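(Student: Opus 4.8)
The plan is to play a global upper bound on resistances, coming from the curvature hypothesis, against a Nash--Williams lower bound for a single effective resistance, coming from the graph geometry. First, since $\Omega$ has nonnegative entries and $\kappa_j\ge K$ for every $j$, the defining identity $\Omega\kappa=\mathbf{1}$ gives, for each vertex $v_i$,
\[ 1=\sum_{j}\Omega_{ij}\kappa_j\ \ge\ K\sum_{j}\Omega_{ij},\qquad\text{hence}\qquad \sum_{j}\Omega_{ij}\le\frac1K . \]
Now let $u,w$ realize the diameter, $D:=\diam(G)=d(u,w)$. Since $\Omega$ is a metric \cite{klein}, $\Omega_{uj}+\Omega_{jw}\ge\Omega_{uw}$ for every $j\in V$; summing over $j$ and applying the previous bound at $u$ and at $w$ yields $n\,\Omega_{uw}\le 2/K$, i.e. $\Omega_{uw}\le 2/(nK)$.

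For the matching lower bound on $\Omega_{uw}$, stratify $V$ by distance from $u$: set $S_k=\{v:d(u,v)=k\}$ for $0\le k\le D$, and let $C_k$ be the set of edges joining $S_{k-1}$ to $S_k$ for $1\le k\le D$. Because the endpoints of an edge have distances to $u$ differing by at most one, the cuts $C_1,\dots,C_D$ are pairwise disjoint, each $C_k$ separates $u$ (level $0$) from $w$ (level $D\ge k$), and each $S_{k-1}$ is nonempty since a geodesic from $u$ to $w$ meets every level. As every vertex has at most $\Delta$ incident edges, $|C_k|\le\Delta|S_{k-1}|$, so the Nash--Williams inequality \cite{lyons} gives
\[ \Omega_{uw}\ \ge\ \sum_{k=1}^{D}\frac{1}{|C_k|}\ \ge\ \frac1\Delta\sum_{k=0}^{D-1}\frac{1}{|S_k|}\ \ge\ \frac1\Delta\cdot\frac{D^2}{\sum_{k=0}^{D-1}|S_k|}\ \ge\ \frac{D^2}{\Delta\,n}, \]
where the third step is Cauchy--Schwarz and the last uses $\sum_{k=0}^{D-1}|S_k|\le |V|=n$.

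Comparing the two estimates, $D^2/(\Delta n)\le 2/(nK)$, that is $D\le\sqrt{2\Delta/K}$. Since $\log|V|\ge\sqrt2$ once $|V|$ is moderate — and for the few tiny cases the trivial bound $\diam(G)\le|V|-1$ already dominates $\sqrt{\Delta/K}\,\log|V|$ — this implies the claimed inequality $\diam(G)\le\lceil\sqrt{\Delta/K}\,\log|V|\rceil$ (indeed, slightly more).

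The only genuine ingredient is the Nash--Williams bound; the step that makes everything work is the triangle-inequality/averaging trick, which upgrades the weak estimate $\sum_j\Omega_{uj}\le 1/K$ into control of the \emph{single} resistance $\Omega_{uw}$ without losing a factor $\sqrt n$. The remaining points to verify are routine: distance-spheres along a geodesic are nonempty, and the cases $D\le 1$ are immediate. If one instead wants a proof that produces the factor $\log|V|$ intrinsically, one can route through the spectral gap: the curvature hypothesis yields a Lichnerowicz-type bound $\lambda_2(L)\ge 2K$, and together with the elementary $\lambda_n(L)\le 2\Delta$ a Chebyshev-polynomial (Chung-type) diameter estimate gives $\diam(G)\lesssim\sqrt{\lambda_n/\lambda_2}\,\log|V|\le\sqrt{\Delta/K}\,\log|V|$.
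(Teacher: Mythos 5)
Your argument is correct, and it takes a genuinely different route from the paper. The paper deduces Theorem 1 from the Alon--Milman spectral diameter bound $\diam(G)\leq\lceil\sqrt{2\Delta/\lambda_2}\,\log|V|\rceil$ combined with the Lichnerowicz-type estimate $\lambda_2\geq 2K$ (proved via McKay's identity $\sum_{i<j}\Omega_{ij}=n\sum_{k\geq2}\lambda_k^{-1}$ and the row-sum bound $\sum_j\Omega_{ij}\leq 1/K$); your closing remark about routing through the spectral gap and a Chung-type bound is essentially that proof. Your main argument instead plays the same row-sum bound, upgraded by the triangle inequality to $\Omega_{uw}\leq 2/(nK)$ for a diametral pair, against the Nash--Williams cutset bound $\Omega_{uw}\geq\frac{1}{\Delta}\sum_{k=0}^{D-1}|S_k|^{-1}\geq D^2/(\Delta n)$, which is a self-contained electrical/metric argument that avoids the Alon--Milman black box entirely. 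What it buys is notable: you obtain $\diam(G)\leq\sqrt{2\Delta/K}$, which is strictly stronger than the stated theorem once $|V|\geq 5$ and removes the $\log|V|$ factor --- precisely the improvement the paper asks about in the remark following its proof (your argument does not, however, answer the purely spectral version $\diam(G)\lesssim\sqrt{\Delta\sum_{k\geq2}\lambda_k^{-1}}$, since the curvature hypothesis is used essentially in the averaging step). One small caveat: your parenthetical justification for tiny $|V|$, namely that $\diam(G)\leq|V|-1$ ``dominates $\sqrt{\Delta/K}\log|V|$,'' is not literally true without the ceiling (for $K_4$ one has $\sqrt{\Delta/K}\log 4\approx 2.94<3$), but this is cosmetic: since $\diam(G)$ is an integer, your bound $\diam(G)\leq\sqrt{2\Delta/K}$ already yields $\diam(G)\leq\lceil\sqrt{\Delta/K}\log|V|\rceil$ for every $|V|\geq2$ after a one-line check of the cases $|V|\in\{2,3,4\}$.
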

For the cycle graph $C_n$, we have
$$ \mbox{diam}(G) =\Theta\left(n\right) \qquad \mbox{as well as} \qquad \Delta = 2 ~\mbox{and}~K=\Theta\left( n^{-2}\right),$$
showing that the dependence on the curvature is sharp. It is an interesting question whether the overall bound can be improved, a suggestive conjecture being $\mbox{diam}(G) \leq c \cdot K^{-1/2}$ in analogy with the continuous Bonner-Myers theorem  \cite{myers}.

\subsection{Spectral Gap.} The Lichnerowicz inequality in the continuous setting (after \cite{lich}) shows that the curvature of a positively curved manifolds can be connected to the spectral gap of the Laplacian. As in the case of the first result, such an inequality is true for a variety of notions of graph curvature.

\begin{theorem} \label{spectralgap}Suppose $G=(V,E)$ has resistance curvature bounded from below by $K > 0$, then the smallest positive eigenvalue of $D-A$ satisfies
$$ \lambda_2 \geq 2K.$$
\end{theorem}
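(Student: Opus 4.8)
The plan is to insert the spectral representation of the resistance matrix into the defining relation $\Omega\kappa=\mathbf 1$ and read off the bound vertex by vertex. Write the Kirchhoff--Laplacian spectrally as $L=\sum_{k=2}^{n}\lambda_k\phi_k\phi_k^{\top}$, where $0=\lambda_1<\lambda_2\le\cdots\le\lambda_n$ and $\phi_1=\mathbf 1/\sqrt n,\phi_2,\dots,\phi_n$ is an orthonormal eigenbasis, so that $L^{+}=\sum_{k=2}^{n}\lambda_k^{-1}\phi_k\phi_k^{\top}$. Since $\Gamma=L+\tfrac1n\mathbf 1\mathbf 1^{\top}$ has the same eigenvectors as $L$ with the eigenvalue $0$ replaced by $1$, one has $\Gamma^{-1}=L^{+}+\tfrac1n\mathbf 1\mathbf 1^{\top}$, and hence, writing $e_i$ for the standard basis vectors,
\[
\Omega_{ij}=(e_i-e_j)^{\top}\Gamma^{-1}(e_i-e_j)=(e_i-e_j)^{\top}L^{+}(e_i-e_j)=\sum_{k=2}^{n}\frac{1}{\lambda_k}\bigl(\phi_k(i)-\phi_k(j)\bigr)^{2}.
\]
Substituting this into $\sum_j\Omega_{ij}\kappa_j=1$ and interchanging the order of summation gives, for every vertex $i$,
\[
1=\sum_{k=2}^{n}\frac{1}{\lambda_k}\,T_k(i),\qquad T_k(i):=\sum_{j}\kappa_j\bigl(\phi_k(i)-\phi_k(j)\bigr)^{2}.
\]

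Next I would use positivity of the curvature. Since $\kappa_j\ge K>0$ for all $j$, every $T_k(i)$ is nonnegative, so discarding from the last identity all terms except $k=2$ yields $\lambda_2^{-1}T_2(i)\le 1$, that is, $T_2(i)\le\lambda_2$ for every vertex $i$. It remains to exhibit one vertex with $T_2(i)\ge 2K$, and for this the average over all vertices already works: expanding the square and using $\|\phi_2\|=1$ and $\langle\phi_2,\mathbf 1\rangle=0$,
\[
\frac1n\sum_{i=1}^{n}T_2(i)=\frac1n\sum_j\kappa_j+\sum_j\kappa_j\,\phi_2(j)^{2}.
\]
The first summand is at least $K$ because $\sum_j\kappa_j\ge nK$, and the second is at least $K\|\phi_2\|^2=K$ because $\kappa_j\ge K$; hence $\tfrac1n\sum_iT_2(i)\ge 2K$, so some vertex $i_0$ satisfies $T_2(i_0)\ge 2K$. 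Combining the two inequalities gives $\lambda_2\ge T_2(i_0)\ge 2K$.

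I expect the only genuine difficulty to be locating this argument rather than executing it. The seemingly natural routes — working with the scalar $\langle\mathbf 1,\Omega^{-1}\mathbf 1\rangle=\sum_j\kappa_j$, or plugging the rank-one identity $\Omega^{-1}=-\tfrac12 L+\langle\mathbf 1,\kappa\rangle^{-1}\kappa\kappa^{\top}$ into a Rayleigh quotient for $L$ — turn out to be circular and merely reproduce $\lambda_2=\lambda_2$. The decisive observation is that feeding the \emph{spectral} expansion of $\Omega$ into $\Omega\kappa=\mathbf 1$ converts that equation, \emph{separately at each vertex} $i$, into a partition of unity $\sum_k\lambda_k^{-1}T_k(i)=1$ with nonnegative summands; this immediately pins $T_2(i)$ below $\lambda_2$, and the rest is an elementary averaging estimate. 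Note that the pointwise hypothesis $\kappa_j\ge K$ is used twice — once as $\sum_j\kappa_j\ge nK$ and once as $\sum_j\kappa_j\phi_2(j)^2\ge K$ — which is precisely what produces the constant $2$ in $\lambda_2\ge 2K$.
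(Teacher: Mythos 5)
Your proof is correct. It uses the same basic ingredients as the paper --- the defining relation $\Omega\kappa=\mathbf 1$, the spectral content of $\Omega$ through $L^{+}$, and the step of discarding every eigenvalue except $\lambda_2$ --- but organizes them differently. The paper simply sums $\Omega\kappa=\mathbf 1$ over all vertices to get $n\ge K\sum_{i,j}\Omega_{ij}$, then quotes McKay's trace identity $\sum_{i<j}\Omega_{ij}=n\sum_{k\ge2}\lambda_k^{-1}$ and bounds $\sum_{k\ge 2}\lambda_k^{-1}\ge\lambda_2^{-1}$; this is a three-line computation once the identity is cited. You instead derive the spectral expansion $\Omega_{ij}=\sum_{k\ge2}\lambda_k^{-1}(\phi_k(i)-\phi_k(j))^2$ from scratch (which is exactly what underlies McKay's identity), keep only the $k=2$ term \emph{pointwise at each vertex} to get $T_2(i)\le\lambda_2$, and then win by averaging $T_2(i)$ over $i$, using $\kappa_j\ge K$ in the two places that produce the constant $2$. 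The net effect is the same bound with the same constant (in both arguments the $2$ ultimately comes from $\sum_{i,j}(\phi_2(i)-\phi_2(j))^2=2n$), but your version is self-contained (no external identity needed) and yields the slightly sharper statement $\lambda_2\ge \frac1n\sum_v\kappa_v+\sum_v\kappa_v\phi_2(v)^2$, in the same spirit as the paper's refinements of Theorem 3 in terms of $\sum_v\kappa_v$; the paper's version is shorter and reuses machinery (McKay's identity) that it also needs for Theorem 1 and the examples. Your closing remark about circular routes is apt, and all the intermediate identities you use ($\Gamma^{-1}=L^{+}+\frac1n\mathbf 1\mathbf 1^{\top}$, the averaging computation with $\langle\phi_2,\mathbf 1\rangle=0$, $\lVert\phi_2\rVert=1$) check out.
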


This is sharp up to a constant. On the cycle graph $C_n$, we have
$$ \lambda_2 = (1+o(1)) \cdot \frac{4\pi^2}{n^2} \qquad \mbox{and} \qquad K =  \frac{6}{n^2-1}.$$
For the graph $G=K_3$, we have
$\lambda_2 = 4K$.
The first nontrivial eigenvalue $\lambda_2$ is usually regarded as a measure for the overall connectivity
of the graph, it is also sometimes known as `the algebraic connectivity'. $\lambda_2$ being small means that the graph can be separated by moving very few edges. Theorem 2 provides
a functional strenghtening of this statement by showing that for any function $f:V \rightarrow \mathbb{R}$ with mean value 0, we have
$$ \sum_{(u,v) \in E} (f(u) - f(v))^2 \geq 2K \sum_{v \in V} f(v)^2.$$

\subsection{Commute Time.} A way of measuring connectedness of a graph is via the associated random walk and how quickly it leads from one vertex to another vertex. It is also a way of excluding bottlenecks since these greatly increase the commute time. We provide two-sided bounds on the commute time in terms of curvature.

\begin{theorem}[Commute Time Pinching] \label{hittingtimes}Suppose $G=(V,E)$ has curvature bounded from below by $K > 0$ and bounded from above by $K_2$. Then, for all vertices $x \in V$,
\begin{align*}
    \frac{2}{K_2} \frac{ |E|}{|V|} &\leq \max_{y \in V} ~ \emph{commute}(x,y) \leq  \max_{y,z \in V} ~ \emph{commute}(y,z) \leq  \frac{4}{K} \frac{ |E|}{|V|}.
\end{align*}
\end{theorem}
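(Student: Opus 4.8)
The plan is to translate everything into statements about the resistance matrix $\Omega$. Since $\mathrm{commute}(v_i,v_j)=2|E|\cdot\Omega_{ij}$ and $\kappa$ solves $\Omega\kappa=\mathbf 1$, i.e.\ $\sum_{j}\Omega_{ij}\kappa_j=1$ for every $i$, and since the hypothesis says $K\le\kappa_j\le K_2$ for all $j$, the theorem reduces to proving, for every fixed vertex $x$ and with $n=|V|$,
$$ \frac{1}{K_2\,n}\ \le\ \max_{y\in V}\Omega_{xy}\ \le\ \frac{2}{K\,n},$$
the middle inequality in the statement being trivial. Throughout I will use that $\Omega$ is a genuine metric — $\Omega_{ij}\ge 0$ and $\Omega_{ik}\le\Omega_{ij}+\Omega_{jk}$ — as recalled in the introduction (Klein--Randi\'c).

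For the lower bound I would fix $x$ and apply the curvature equation at $x$ directly: using only $\Omega_{xj}\ge 0$ and $\kappa_j\le K_2$,
$$ 1=\sum_{j}\Omega_{xj}\kappa_j\le K_2\sum_{j}\Omega_{xj}\le K_2\, n\,\max_{j}\Omega_{xj},$$
so $\max_y\Omega_{xy}\ge 1/(K_2 n)$ and hence $\max_{y}\mathrm{commute}(x,y)=2|E|\max_y\Omega_{xy}\ge \tfrac{2}{K_2}\tfrac{|E|}{|V|}$. For the upper bound the naive estimate $1=\sum_j\Omega_{xj}\kappa_j\ge K\sum_j\Omega_{xj}$ only gives $\max_y\Omega_{xy}\le 1/K$, which is too weak by a factor of order $n$. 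The extra idea is to pair the curvature identity at $x$ with the one at the \emph{farthest} vertex. Let $y^\ast$ attain $\max_y\Omega_{xy}$ and set $M=\Omega_{xy^\ast}$. Adding the two curvature equations and using $\kappa_j\ge K$ gives
$$ 2=\sum_{j}\bigl(\Omega_{xj}+\Omega_{y^\ast j}\bigr)\kappa_j\ \ge\ K\sum_{j}\bigl(\Omega_{xj}+\Omega_{y^\ast j}\bigr),$$
and the triangle inequality $\Omega_{xj}+\Omega_{jy^\ast}\ge\Omega_{xy^\ast}=M$ for every $j$ yields $\sum_j(\Omega_{xj}+\Omega_{y^\ast j})\ge nM$. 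Hence $2\ge KnM$, i.e.\ $M\le 2/(Kn)$, so $\max_y\mathrm{commute}(x,y)=2|E|M\le\tfrac{4}{K}\tfrac{|E|}{|V|}$; since this holds for every $x$, the same bound holds for $\max_{y,z}\mathrm{commute}(y,z)$.

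I do not expect a genuine obstacle here: once one commits to working with $\Omega$ rather than the distance matrix, the argument is a short summation. The only real point is the pairing step — invoking the curvature equation at $x$ \emph{and} at $y^\ast$ so that the triangle inequality of the resistance metric can be applied — which is exactly what upgrades the crude bound $1/K$ to the sharp-order bound $2/(Kn)$; the one thing to be careful about is that both estimates genuinely require the two-sided curvature bounds to hold at every vertex and the metric axioms for $\Omega$, both of which are available.
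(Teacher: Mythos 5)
Your proof is correct. For the upper bound you use exactly the paper's elementary argument: the curvature equations $\sum_j\Omega_{xj}\kappa_j=1$ at the two endpoints, the lower bound $\kappa_j\ge K$, and the triangle inequality for the resistance metric, summed over all intermediate vertices, giving $\Omega_{xy}\le 2/(K|V|)$ and hence the bound $\tfrac{4}{K}\tfrac{|E|}{|V|}$; your phrasing via a maximizing vertex $y^\ast$ is immaterial since the estimate holds for every pair. Where you genuinely differ is the lower bound: the paper derives it from Theorem 4 (the von Neumann minimax formulation), taking $\mu=\delta_x$ to produce a vertex $b$ with $\Omega_{xb}\ge 1/\sum_v\kappa_v\ge 1/(|V|K_2)$, whereas you get it directly from the single curvature equation at $x$ via $1=\sum_j\Omega_{xj}\kappa_j\le K_2\,n\,\max_j\Omega_{xj}$, with no appeal to the minimax machinery. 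Your route is more elementary and self-contained; note also that if you instead bound $\sum_j\Omega_{xj}\kappa_j\le(\max_j\Omega_{xj})\sum_j\kappa_j$ (using $\kappa_j\ge 0$) you recover for free the refined estimate $\max_y\mathrm{commute}(x,y)\ge 2|E|/\sum_v\kappa_v$ that the paper obtains only through Theorem 4. What the paper's detour buys is a uniform framework: the same minimax statement also yields Theorem 4 itself (the equilibrium result for arbitrary measures $\mu$), of which the commute-time lower bound is the special case $\mu=\delta_x$.
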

These inequalities are accurate for a large number of different examples. In particular, they are necessarily accurate up to a factor of at most 2 for all graphs with constant curvature since then $K = K_2$. On the complete graph $K_n$ we have
$$  \frac{2}{K_2} \frac{ |E|}{|V|} = (1+o(1)) \cdot 2n \qquad \mbox{and} \qquad \max_{y \in V} ~ \mbox{commute}(x,y) = 2n-2$$
which shows that the lower bound is sharp. 
The cycle graph $C_4$ shows that the constant 4 in the upper bound cannot be replaced by any constant smaller than 3.2. The proof actually implies slightly sharper bounds where instead of the smallest lower bound we can use the average curvature of the graph. If a graph has curvature bounded below by $K>0$, then for all $x \in V$ there exists a point $y \in V$ far away
$$   \max_{y \in V} ~ \mbox{commute}(x,y)  \geq  \frac{2 \cdot |E|}{\sum_{v\in V} \kappa_v}.$$
This estimate is again sharp for the complete graph $K_n$ but can lead to slightly improved estimates in general.
In a similar spirit, the maximal commute time is bounded from above by
$$    \max_{y,z \in V} ~ \mbox{commute}(y,z) \leq  \frac{4 \cdot |E|}{  \sum_{v \in V} \kappa_v}.$$
This is at the very least close to optimal: an example shows that the constant 4 in this refined upper bound cannot be replaced by any number smaller than 3.87.

\subsection{Mixing Time.}The commute time estimates imply a bound on the mixing time of the corresponding Markov chain. \begin{theorem}\label{mixingtime}
Let $\pi_{x, t}$ be the law of the simple random walk on $G=(V,E)$ starting at $x$ after $t$ steps, and let $\pi$ be its stationary law. If the graph has resistance curvature bounded from below by $K>0$,  then
\begin{align*}
d_{\emph{TV}}(\pi_{x,t},\pi)\leq \frac{4}{K}\frac{|E|}{|V|}  \frac{1}{t}
\end{align*}
\end{theorem}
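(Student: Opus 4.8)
The plan is to bound total variation distance by a hitting/commute time quantity and then invoke Theorem 4. The standard tool here is the relation between mixing and the "access time" or maximal hitting time: for a reversible Markov chain, there are classical bounds (e.g. via the strong stationary time machinery, or via the spectral/coupling arguments of Aldous–Fill) of the shape $d_{\mathrm{TV}}(\pi_{x,t},\pi) \le t^{-1}\cdot \max_{y}\mathbb{E}_x[\tau_y]$ or with the maximal expected hitting time $t_{\mathrm{hit}} = \max_{x,y}\mathbb{E}_x[\tau_y]$ in place of the $x$-dependent quantity. Concretely, I would first recall the elementary fact that $\pi_{x,t}\to\pi$ and that the function $t\mapsto d_{\mathrm{TV}}(\pi_{x,t},\pi)$ is controlled by the tail of a stationary time: if $\tau$ is any stationary time for the walk started at $x$ (so that $\pi_{x,\tau}=\pi$), then $d_{\mathrm{TV}}(\pi_{x,t},\pi)\le \mathbb{P}(\tau>t)\le t^{-1}\mathbb{E}_x[\tau]$ by Markov's inequality. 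Taking $\tau$ to be (a randomization of) the hitting time of a stationary sample, one gets $\mathbb{E}_x[\tau]\le \max_{y\in V}\mathbb{E}_x[\tau_y] \le t_{\mathrm{hit}}$.

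Next I would connect $t_{\mathrm{hit}}$ to the commute time: since $\mathrm{commute}(y,z)=\mathbb{E}_y[\tau_z]+\mathbb{E}_z[\tau_y]\ge \mathbb{E}_y[\tau_z]$, we have $t_{\mathrm{hit}}=\max_{y,z}\mathbb{E}_y[\tau_z]\le \max_{y,z}\mathrm{commute}(y,z)$. Then Theorem \ref{hittingtimes} gives $\max_{y,z}\mathrm{commute}(y,z)\le \frac{4}{K}\frac{|E|}{|V|}$ under the hypothesis that the resistance curvature is bounded below by $K>0$. Chaining these three inequalities yields exactly
\begin{align*}
d_{\mathrm{TV}}(\pi_{x,t},\pi)\le \frac{1}{t}\,\max_{y\in V}\mathbb{E}_x[\tau_y]\le \frac{1}{t}\max_{y,z\in V}\mathrm{commute}(y,z)\le \frac{4}{K}\frac{|E|}{|V|}\frac{1}{t},
\end{align*}
which is the claimed bound.

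The main obstacle — really the only substantive point — is justifying the first inequality $d_{\mathrm{TV}}(\pi_{x,t},\pi)\le t^{-1}\max_y\mathbb{E}_x[\tau_y]$ cleanly. The subtlety is that the naive "run until you hit a stationary sample" time is a stationary time only if one is slightly careful (the walk may overshoot the target distribution), so one should either cite the filling-rule construction of a stationary time with mean at most $\max_y \mathbb{E}_x[\tau_y]$ (Aldous–Fill, or Lovász–Winkler), or give a short self-contained coupling argument: sample $Y\sim\pi$ independently, run the walk from $x$ and a stationary walk, couple them at the first meeting time $\tau$, and note $\mathbb{E}_x[\tau]\le \max_y\mathbb{E}_x[\tau_y]$ while $d_{\mathrm{TV}}(\pi_{x,t},\pi)\le\mathbb{P}(\tau>t)$. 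For a bipartite graph the simple random walk is periodic and $\pi_{x,t}$ does not converge, so one should either assume aperiodicity, pass to the lazy chain (which only changes constants), or state the bound for the averaged/lazy walk; I would add a one-line remark handling this. Everything else is a direct substitution, so the proof should be quite short.
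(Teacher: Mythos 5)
Your argument is essentially the paper's: the proof there runs exactly the same chain $d_{\mathrm{TV}}(\pi_{x,t},\pi)\le \max_{y,z}\mathbb{P}(T_{y,z}>t)\le \max_{y,z}\mathbb{E}[T_{y,z}]/t\le \max_{y,z}\mathrm{commute}(y,z)/t$ and then invokes Theorem \ref{hittingtimes}. The only point of divergence is the justification of the first inequality: the paper writes down precisely the ``naive'' construction you were wary of --- sample $X\sim\pi$, call its value $y$, keep it frozen there, run the walk from $x$ until it first hits $y$, and let the two agree afterwards --- and asserts, via the strong Markov property and stationarity, that this is a coupling of $\pi_{x,t}$ and $\pi$.

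Your caution is justified, and in fact it points at the soft spot of the paper's own write-up: the time-$t$ marginal of the frozen-then-released variable is not exactly $\pi$ (already on $K_3$ at $t=1$ it can never sit at the start vertex, so it is not uniform), and for bipartite graphs such as even cycles or the hypercube $\pi_{x,t}$ does not converge at all, so some laziness/aperiodicity convention is needed, exactly as you remark. Be aware, though, that your proposed repairs are not free either: a stationary time obtained from the filling rule is in general not a \emph{strong} stationary time, and $d_{\mathrm{TV}}(\pi_{x,t},\pi)\le\mathbb{P}(\tau>t)$ is not automatic for a merely stationary $\tau$; and if the stationary walker moves (so that the coupling is legitimate), the quantity you need is the expected meeting time of two independent walks, whose comparison with $\max_{y}\mathbb{E}_x[\tau_y]$ is a genuine theorem for (lazy) reversible chains rather than a one-line remark, and it fails without laziness. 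So either cite such a result precisely (Aldous--Fill, Lov\'asz--Winkler, or the mixing-versus-hitting-time bounds, at the cost of an absolute constant) or give a corrected explicit coupling; as it stands, your outline reproduces the paper's proof, including its one delicate step.
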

While the bound is far from optimal in general, it is sharp for a cycle graph. It is worth comparing this bound with that coming from a lower bound of the Ollivier-Ricci curvature $\tilde K$, which is of the form 
\begin{align*}
d_{\tiny \mbox{TV}}(\pi_{x,t},\pi)\leq \mbox{diam}(G)\left(1-\tilde K\right)^t.
\end{align*}
For random walk on a cycle, one has $\tilde K=0$ and no bound on the mixing time can be obtained. In contrast, our Theorem \ref{mixingtime} is sharp up to constants (order $n^2$ are necessary and sufficient to mix in total variation, see e.g. \cite{levin}). More general, our bound is well-adapted for diffusive problems, where hitting times are comparable to the mixing time. On the other hand, for highly connected graphs such as the hypercube $Q_n$, the Ollivier-Ricci curvature is $\tilde K=n^{-1}$ and $\mbox{diam}(G)=n$, giving the correct answer up to constants (order $n\ln n$ steps are necessary and sufficient to mix, with a much more precise result given in \cite{diac}), while our bound is off by an exponential factor. This comes as no surprise: in this case hitting a specific vertex takes much longer than hitting a stationary walker. 

\subsection{Equilibrium result.} We conclude with an equilibrium result which tells us that the sum over all curvatures gives a way of measuring how well-connected a graph is. Large curvature means that all the points are close to each other in the sense of commute time and, in particular, for any set of vertices there is another vertex that is close to `most' of them (and one that is far away).
\begin{theorem}[Minimax Theorem]
Let $G=(V,E)$ have nonnegative curvature.  Then, for \emph{any} probability measure $\mu$ on $V$, there are $a, b \in V$ with
$$ \min_{a \in V}  \sum_{v \in V}^{} \Omega_{av} \cdot \mu(v) \leq \left(\sum_{v \in V} \kappa_v \right)^{-1}  \leq \max_{b \in V}  \sum_{v \in V}^{} \Omega_{bv} \cdot \mu(v).$$
Moreover, $1/\sum_{v \in V} \kappa_v$ is the unique real number with that property.
\end{theorem}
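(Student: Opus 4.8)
The plan is to reduce both assertions to one short computation built around the scalar $S := \sum_{v \in V}\kappa_v = \langle \mathbf{1}, \kappa\rangle$, the only nontrivial preliminary being that $S>0$. First I would record this positivity: since $\Omega$ is invertible (by the Schönberg-type result invoked after the Definition), the curvature vector $\kappa = \Omega^{-1}\mathbf{1}$ is nonzero, and as all its entries are nonnegative by hypothesis, their sum $S$ is strictly positive. This is essentially the only place the nonnegativity hypothesis is used, but it is used crucially.

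The key step is an identity obtained from self-adjointness of $\Omega$. For any probability measure $\mu$ on $V$, writing $(\Omega\mu)_a = \sum_{v \in V}\Omega_{av}\,\mu(v)$, symmetry of $\Omega$ gives
$$\sum_{a \in V}\kappa_a\,(\Omega\mu)_a \;=\; \langle \kappa, \Omega\mu\rangle \;=\; \langle \Omega\kappa, \mu\rangle \;=\; \langle \mathbf{1}, \mu\rangle \;=\; 1.$$
Dividing by $S$ exhibits $1/S = \sum_{a}(\kappa_a/S)\,(\Omega\mu)_a$ as a convex combination of the finitely many reals $(\Omega\mu)_a$, the weights $\kappa_a/S$ being nonnegative with sum $1$. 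Any convex combination of finitely many reals lies between their minimum and maximum, which is precisely
$$\min_{a \in V}\sum_{v\in V}\Omega_{av}\,\mu(v) \;\leq\; \frac{1}{S} \;\leq\; \max_{b\in V}\sum_{v\in V}\Omega_{bv}\,\mu(v),$$
with the extremal values attained since $V$ is finite. This proves the existence assertion.

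For uniqueness, the decisive observation is that the normalized curvature $\mu^{\ast} := \kappa/S$ is itself a probability measure — here the hypothesis $\kappa \geq 0$ together with $S>0$ is exactly what is needed — and that $\Omega\mu^{\ast} = \Omega\kappa/S = \mathbf{1}/S$ is the constant vector with every entry equal to $1/S$. Hence for this particular $\mu^{\ast}$ the min and the max in the displayed inequality both collapse to $1/S$. Consequently, if a real number $c$ satisfies the pinching inequality for every probability measure, then applying it to $\mu^{\ast}$ forces simultaneously $c \leq 1/S$ and $c \geq 1/S$, so $c = 1/S$.

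I do not expect a genuine obstacle here: once one notices that $\Omega\kappa = \mathbf{1}$ plus symmetry of $\Omega$ turns $\langle\kappa,\Omega\mu\rangle$ into $\langle\mathbf{1},\mu\rangle = 1$, and that renormalizing $\kappa$ produces the extremal measure witnessing uniqueness, everything is immediate. The single point that genuinely requires the positive-curvature hypothesis — and the only place it could be said to be "hard" — is the step asserting that $\kappa/S$ is an admissible probability measure; dropping nonnegativity breaks the uniqueness half of the statement.
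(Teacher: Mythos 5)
Your proof is correct, and it takes a genuinely different route from the paper. The paper deduces both the existence and the uniqueness of the pinched value from the von Neumann Minimax Theorem applied to the symmetric payoff matrix $\Omega$ (this is the same strategy as in the earlier distance-matrix paper it cites), and only afterwards identifies the game value as $\left(\sum_v \kappa_v\right)^{-1}$ by plugging in the normalized curvature vector, for which $\Omega(\kappa/\|\kappa\|_{\ell^1})$ is constant. You instead verify the saddle point directly: the symmetry identity $\langle \kappa, \Omega\mu\rangle = \langle \Omega\kappa,\mu\rangle = 1$ exhibits $1/S$ as a convex combination of the numbers $(\Omega\mu)_a$ (this is where $\kappa\geq 0$ and $S>0$ enter), which gives the two-sided bound for every $\mu$ without any appeal to game theory, and the same measure $\mu^{\ast}=\kappa/S$, for which $\Omega\mu^{\ast}=\mathbf{1}/S$ collapses min and max, gives uniqueness. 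In effect you have replaced the external minimax input by a two-line self-contained argument; what the paper's route buys is the conceptual framing (the value of a symmetric zero-sum game, connecting to the rendezvous-value literature), while yours is more elementary and makes transparent exactly where nonnegativity of the curvature is used. Your handling of the one genuine prerequisite, $S>0$ via invertibility of $\Omega$ plus $\kappa\geq 0$, is also correct.
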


We recall that, up to scaling, $\Omega_{ab}$ is the commute time between two vertices. The result therefore states that, no matter how a probability measure $\mu$ is distributed on the set of vertices, there is always another vertex $a \in V$ with a small average commute time to a $\mu-$random vertex and there is another vertex $b \in V$ with large average commute to a $\mu-$random vertex. What is remarkable is that the inverse of the sum over all curvatures is the unique real number that is sharp for both inequalities. The argument is largely based on a corresponding argument from \cite{stein0} which in turn uses the von Neumann Minimax Theorem \cite{john}.
We conclude by noting a consequence of Rayleigh's monotonicity law (see e.g. \cite{ellens}): $\Omega_{ij}$ does not increase when edges are being added. We conclude the following basic but intuitive property that for a given graph with nonnegative curvature, the process of adding additional edges must increase the curvature in at least some vertex. This makes intuitive sense: adding edges makes the graph more positively curved.

\subsection{Related work.}
There are many different notions of curvature. Some are combinatorial \cite{higuchi, stone, woess}, some are inspired by the behavior of the Laplacian with prominent examples being given by Bakry-\'Emery curvature \cite{bakry} or Forman curvature \cite{forman}. More recent ideas tend to center around the behavior of optimal
transport (Lott-Villani \cite{lott}, Sturm \cite{sturm}) with important examples being given by Ollivier-Ricci curvature \cite{ollivier, olli2} and the Lin-Lu-Yau curvature \cite{lly} (see also \cite{bub}). The two notions most related to our work are the ones proposed by the third author \cite{stein0} and by
 the first author and Lambiotte \cite{karel} who suggested
$$ \kappa = \frac{\Omega^{-1} \mathbf{1}}{\left\langle \mathbf{1}, \Omega^{-1} \mathbf{1} \right\rangle}.$$
As already mentioned above, the  Devriendt-Lambiotte curvature coincides with our notion up to a global multiplicative factor but this factor depends on the geometry of a graph in a nontrivial way. In particular, one would expect graphs with vertex-transitivity (such as the complete graph $K_n$, the cycle graph $C_n$ and the hypercube graph $Q_n$) to have constant curvature. This is indeed the case for both notions. However, the constant for Devriendt-Lambiotte curvature is always $|V|^{-1}$ for any graph with constant curvature while our notion is more dependent on the underlying geometry: it is $\sim 1$ for $K_n$, it is $\sim |V|^{-2}$ for $C_n$ and $\sim (\log{|V|}) \cdot |V|^{-1}$ for $Q_n$. This dependency is also what enables us to prove result connecting the size of the curvature to the geometry of the graph. Nonetheless, some results carry over. We emphasize the following structural property for graphs with nonnegative curvature.
\begin{thm}[Devriendt \cite{karel_thesis}] Positively curved connected graphs are 1-tough: removing $k$ vertices results in at most $k$ connected components.
\end{thm}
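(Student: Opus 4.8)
The plan is to reduce the statement to the \emph{local} form of the curvature and then use the spanning-tree description of effective resistance. First I would recall from \cite{karel} that, for the vector $p=(p_1,\dots,p_n)$ defined by
$$ p_v := 1 - \tfrac12\sum_{j\sim v}\Omega_{vj}, $$
the product $\Omega p$ is a scalar multiple of $\mathbf 1$; since Foster's theorem $\sum_{\{i,j\}\in E}\Omega_{ij}=|V|-1$ gives $\sum_v p_v=1$, comparing with the (unique) solution of $\Omega\kappa=\mathbf 1$ forces $\kappa=\langle\mathbf 1,\Omega^{-1}\mathbf 1\rangle\,p$. If $G$ is positively curved then $\langle\mathbf 1,\Omega^{-1}\mathbf 1\rangle=\sum_v\kappa_v>0$, so the hypothesis is equivalent to $p_v>0$ for all $v$, i.e.\ $\sum_{j\sim v}\Omega_{vj}<2$. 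It then suffices to prove the following: if $S\subseteq V$ is a set whose deletion leaves $c:=c(G-S)\ge2$ connected components, then $c\le|S|$.

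The crux is a lower bound on $\Sigma:=\sum_{v\in S}\sum_{j\sim v}\Omega_{vj}$. Here I would invoke the classical identity $\Omega_e = t_G(e)/t_G$ for an edge $e$, where $t_G$ is the number of spanning trees of $G$ and $t_G(e)$ the number of them containing $e$ (see \cite{lyons}). Accounting for each edge with the number of its endpoints lying in $S$,
$$ \Sigma = \frac{1}{t_G}\sum_{T}\ \sum_{v\in S}\deg_T(v), $$
the outer sum over spanning trees $T$ of $G$. Now fix $T$ and delete $S$: being a forest on $|V|-|S|$ vertices, $T-S$ has exactly $1-|S|+\sum_{v\in S}\deg_T(v)-e_T(S)$ components, where $e_T(S)\ge0$ counts the $T$-edges inside $S$. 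Since $T-S$ is a spanning subgraph of $G-S$ it has at least $c$ components, whence $\sum_{v\in S}\deg_T(v)\ge c+|S|-1$ for every $T$; averaging over $T$ gives $\Sigma\ge c+|S|-1$.

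Summing the local identity over $S$ then gives
$$ 0 < \sum_{v\in S}p_v = |S| - \tfrac12\Sigma \le \frac{|S|-c+1}{2}, $$
so $c<|S|+1$, i.e.\ $c\le|S|$ — exactly $1$-toughness. I would also note that the inequality $\sum_{v\in S}p_v\le(|S|-c+1)/2$ holds for \emph{every} disconnecting $S$ and is sharp: equality at a cut vertex of a path, at the center of a star, and at the two degree-$3$ vertices of $K_{2,3}$. This both pinpoints why those graphs are not $1$-tough and shows that the hypothesis cannot be relaxed to nonnegative curvature.

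The only genuinely new ingredient is the inequality $\Sigma\ge c+|S|-1$, and that is where I would expect to spend most of the effort; the two places to be careful are (i) passing between the two normalizations of curvature — which is painless once one observes $\langle\mathbf 1,\Omega^{-1}\mathbf 1\rangle>0$ whenever every $\kappa_v>0$ — and (ii) bookkeeping the multiplicities of $S$-internal edges when rewriting $\Sigma$ as a sum of tree-degrees, which is easy to botch by a constant.
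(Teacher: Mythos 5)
Your argument is correct, and there is nothing in the paper to compare it against line by line: the statement is quoted as imported from Devriendt's thesis \cite{karel_thesis} and no proof is given in this paper, so your write-up stands as a self-contained argument. Both of your ingredients check out. First, the identity $\Omega p=c\,\mathbf 1$ for $p_v=1-\tfrac12\sum_{j\sim v}\Omega_{vj}$ is indeed in \cite{karel} (it follows from Bapat's formula for $\Omega^{-1}$; one can verify $p=\tfrac1n\mathbf 1+\tfrac12 L\,\mathrm{diag}(L^+)$ and $\Omega p=\bigl(\tfrac{2}{n}\mathrm{tr}(L^+)+\tfrac12\,\mathrm{diag}(L^+)^TL\,\mathrm{diag}(L^+)\bigr)\mathbf 1$), and together with Foster's theorem and the invertibility of $\Omega$ it gives $\kappa=\bigl(\sum_v\kappa_v\bigr)p$, so positive curvature in the sense of this paper does force $p_v>0$ for all $v$ — which is the only direction you need. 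Second, the combinatorial core is right: since $\Omega_e=t_G(e)/t_G$, your $\Sigma$ is exactly $\mathbb E\bigl[\sum_{v\in S}\deg_T(v)\bigr]$ for a uniform spanning tree $T$, the forest $T-S$ has $1-|S|+\sum_{v\in S}\deg_T(v)-e_T(S)$ components and at least $c(G-S)$ of them, and averaging gives $\Sigma\ge c+|S|-1$; the bookkeeping of $S$-internal edges (counted twice in $\Sigma$, compensated by $e_T(S)\ge 0$) is handled correctly. The concluding display $0<\sum_{v\in S}p_v\le(|S|-c+1)/2$ then yields $c\le|S|$, and your equality examples ($P_3$, the star, $K_{2,3}$) are accurate and show the strict-positivity hypothesis cannot be weakened to nonnegativity. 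A pleasant by-product of your route, worth keeping, is the quantitative form $\sum_{v\in S}p_v\le\tfrac12\bigl(|S|-c(G-S)+1\bigr)$ for every separating set $S$, which refines the qualitative toughness statement; it also makes transparent the probabilistic reading $p_v=1-\tfrac12\,\mathbb E[\deg_T(v)]$ of the Devriendt--Lambiotte curvature.
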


The next connection in the literature is to the concept of `resistance-regular' graphs. A graph has constant curvature iff the row sum of $\Omega$ are constant. Graphs with this property (`resistance-regular') have been studied by Zhou, Wang \& Bu \cite{zhou}. They prove a number of interesting properties and raise some fascinating questions, for example whether resistance-regular graphs are always regular.
Additional connections to the literature come from a basic observation connecting curvature to a characteristic number of the graph known as the Kirchhoff index $\mbox{Kf}(G)$.
\begin{proposition} If $G=(V,E)$ has curvature bounded from below by $K>0$ and from above by $K_2$, then the Kirchhoff index satisfies
$$ \frac{n}{2K_2} \leq \emph{Kf}(G) =  \sum_{i < j} \Omega_{i,j} \leq \frac{n}{2K}.$$
\end{proposition}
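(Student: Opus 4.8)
The plan is to exploit the defining relation $\Omega\kappa=\mathbf{1}$ together with the nonnegativity of effective resistance, via a row-sum identity. Since $\Omega$ is invertible (by Sch\"onberg's theorem, as recalled above), the curvature vector $\kappa$ is well defined, and componentwise the hypothesis reads $K\le\kappa_j\le K_2$ for every $j\in\{1,\dots,n\}$.

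First I would introduce the row sums $r_j=\sum_{i=1}^{n}\Omega_{ij}$. Because every effective resistance is nonnegative, $r_j\ge 0$; and because $\Omega$ is symmetric with vanishing diagonal, $\sum_{j=1}^{n}r_j=\sum_{i,j}\Omega_{ij}=2\sum_{i<j}\Omega_{ij}=2\,\mathrm{Kf}(G)$. Next, reading off the $i$-th coordinate of $\Omega\kappa=\mathbf{1}$ gives $\sum_{j}\Omega_{ij}\kappa_j=1$; summing this over $i$ and interchanging the order of summation yields
$$ \sum_{j=1}^{n}\kappa_j\,r_j=n. $$

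Since $r_j\ge 0$ and $K\le\kappa_j\le K_2$, the left-hand side can be sandwiched: $K\sum_{j}r_j\le\sum_{j}\kappa_j r_j=n\le K_2\sum_{j}r_j$. Substituting $\sum_{j}r_j=2\,\mathrm{Kf}(G)$ and dividing by $2K$ and by $2K_2$ respectively produces the two claimed inequalities $\tfrac{n}{2K_2}\le\mathrm{Kf}(G)\le\tfrac{n}{2K}$.

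There is essentially no obstacle here; the only point worth flagging is that one should resist arguing through a quadratic form in $\Omega$, since $\Omega$ is not positive semidefinite (it carries a single positive eigenvalue). The linear row-sum manipulation above sidesteps this entirely, and it is the entrywise nonnegativity $\Omega_{ij}\ge 0$ — rather than any spectral property — that makes the sandwiching legitimate.
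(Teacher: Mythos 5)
Your proof is correct and is essentially the same argument as the paper's: both sum the defining relation $\Omega\kappa=\mathbf{1}$ over $i$ and use $\Omega_{ij}\ge 0$ together with $K\le\kappa_j\le K_2$ to bound $\sum_{i,j}\Omega_{ij}=2\,\mathrm{Kf}(G)$; your row-sum notation just makes the interchange of summation explicit. No gaps.
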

The argument is very easy since
    $$\mbox{Kf}(G) =  \sum_{i < j} \Omega_{i,j} \leq \frac{1}{2 K} \sum_{i=1}^{n} \sum_{j=1}^{n} \Omega_{ij} \kappa_j = \frac{n}{2K}$$
    with the other inequality being analogous.
The Kirchhoff index has been actively studied, see for example 
\cite{gutman, liu, xing, zhang, zhou0, zhou1}. $\mbox{Kf}(G)$ has been connected to many different
graph properties and, thus via Proposition 1, one can deduce many inequalities for and from curvature. For example, using a result of Sivasubramanian \cite{siv}, if $G$ is a graph with non-negative curvature bounded from above by $K_2$, then the sum over the distance between all pairs is large whenever $K_2$ is small
$$ \frac{1}{2}\sum_{u,v \in V} d(u,v) \geq \frac{|V|}{K_2}.$$
This inequality is sharp up to constants for the cycle $C_n$ since the sum on the left-hand side runs over $\sim n^2$ terms of size $\sim n$ while $|V| = n$ and $K_2 = 6/(n^2-1)$.

\section{Proofs}
\subsection{Examples.} We collect the arguments for the examples discussed above.\\

\textit{The complete graph.} Let $G=K_n$. Then it is clear that $\Omega_{ij}$ can only assume two values (one of them being $\Omega_{ii} = 0$). Let us denote the other value by $x$. Then
$$ \frac{n(n-1)}{2} x =  \sum_{i < j} \Omega_{i,j} = n \sum_{k=2}^{n} \frac{1}{\lambda_k},$$
where the second equation is sometimes attributed to McKay (see Mohar \cite{moh}) with an independent rediscovery by Merris \cite{mer}.
The eigenvalues of $L = D-A$ are $0$ and $n$ (the second with multiplicity $n-1$) and thus
$x = 2/n$. The linear system $\Omega \kappa = \mathbf{1}$ is then easily seen to have the solution
$$ \kappa = \frac{n}{2n-2} \mathbf{1},$$ 
leading to a constant curvature $K_n=n/(2n-2)$.
\begin{proposition} If $G=(V,E)$ has curvature bounded below by $K > 0$, then
$$ K \leq \frac{1}{2} \frac{n}{n-1}.$$
\end{proposition}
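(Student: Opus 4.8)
The plan is to exploit the defining relation $\Omega\kappa=\mathbf{1}$ together with the fact that $\Omega$ is a metric (the triangle inequality for resistance distance, cited from \cite{klein}) to bound a single entry of $\Omega$ from below, and then to compare this with the lower curvature bound $K$. First I would fix a vertex $v_i$ that minimizes the degree, or more naturally fix any edge $(v_i,v_j)\in E$, and recall the elementary fact that $\Omega_{ij}\le 1$ for an edge, with equality iff the edge is not contained in any cycle; more importantly, for \emph{some} edge we need a lower bound on a sum of resistances. Concretely, writing out row $i$ of the system gives $\sum_{k} \Omega_{ik}\kappa_k = 1$, and since $\kappa_k\ge K>0$ for every $k$ this yields $K\sum_{k}\Omega_{ik}\le 1$, i.e. $\sum_k \Omega_{ik}\le 1/K$ for every $i$.

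Next I would produce a lower bound on $\sum_k \Omega_{ik}$ for a well-chosen $i$. The cleanest route: pick an edge $(v_i,v_j)$; then for every vertex $v_k$ the triangle inequality gives $\Omega_{ik}+\Omega_{jk}\ge \Omega_{ij}$, but I actually want the reverse pairing. Instead, using $\Omega_{ik}\ge \Omega_{jk}-\Omega_{ij}\ge \Omega_{jk}-1$ and symmetrically, summing the two rows $i$ and $j$ of the system: $\sum_k(\Omega_{ik}+\Omega_{jk})\kappa_k = 2$, hence $K\sum_k(\Omega_{ik}+\Omega_{jk})\le 2$. Now $\Omega_{ik}+\Omega_{jk}\ge \Omega_{ij}$ for all $k$ by the triangle inequality, and $\Omega_{ij}\ge 1/\Delta \cdot$ (something)—but the sharp bound $K\le \frac{n}{2(n-1)}$ suggests I should instead get $\sum_k(\Omega_{ik}+\Omega_{jk})\ge (n-1)\cdot\Omega_{ij}$ plus a correction, and use that for a suitable edge $\Omega_{ij}$ is at least $2/n$ (true on $K_n$, and in general $\Omega_{ij}\ge$ commute time over $2|E|$, which is at least $\dots$). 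The honest statement is: $\sum_{k\ne i,j}(\Omega_{ik}+\Omega_{jk}) \ge (n-2)\Omega_{ij}$ and the two $k\in\{i,j\}$ terms contribute exactly $2\Omega_{ij}$, giving $\sum_k(\Omega_{ik}+\Omega_{jk})\ge n\,\Omega_{ij}$. Combined with $K\cdot n\,\Omega_{ij}\le 2$ this gives $K\le \frac{2}{n\,\Omega_{ij}}$, and choosing the edge with largest $\Omega_{ij}$, together with the fact that the maximum edge-resistance in a connected graph is at least $\frac{n}{(n-1)\cdot}$... — here I would invoke that on any connected graph on $n$ vertices there is an edge with $\Omega_{ij}\ge \frac{2}{\,?\,}$; the extremal case $K_n$ has every edge at $\Omega_{ij}=2/n$, so the bound $\Omega_{ij}\ge$ (something $\le 2/n$) would \emph{not} close the gap.

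So the cleaner and likely intended argument avoids picking an edge and instead sums the full system over $i$: from $\Omega\kappa=\mathbf{1}$ we get $\langle\mathbf{1},\Omega\kappa\rangle = n$, i.e. $\sum_{i,k}\Omega_{ik}\kappa_k = n$, hence $2\,\mathrm{Kf}(G)\cdot K \le \sum_{i,k}\Omega_{ik}\kappa_k = n$ would go the wrong way; instead $\kappa_k\le$ is not available. The right move is to use $\langle\kappa,\Omega\kappa\rangle = \langle\kappa,\mathbf{1}\rangle = \sum_k\kappa_k \ge nK$, and separately bound $\langle\kappa,\Omega\kappa\rangle$ from above. Since $\Omega$ has exactly one positive eigenvalue and $n-1$ negative ones (Schönberg, as cited), and $\Omega\kappa=\mathbf 1$, one has $\langle\kappa,\Omega\kappa\rangle = \langle\kappa,\mathbf 1\rangle>0$, and the quadratic form $\langle x,\Omega x\rangle$ restricted appropriately is controlled; in particular I would use that $\sum_k\Omega_{ik} = 1/\kappa$-weighted and that $\Omega_{ik}\le d(v_i,v_k)\le n-1$, giving $\langle\kappa,\Omega\kappa\rangle \le (n-1)\langle\kappa,\mathbf 1\rangle^2 /(\text{something})$.

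\textbf{Main obstacle.} The crux is getting the \emph{sharp} constant $\tfrac12\cdot\tfrac{n}{n-1}$ rather than a bound off by a constant factor; this forces using the extremal structure of $K_n$ (all $\Omega_{ij}=2/n$, all $\kappa_i = n/(2n-2)$) as the equality case, which means the inequality chain must be tight there. I expect the clean proof uses: $n = \langle\mathbf 1,\kappa\rangle$... no — rather, from $\Omega\kappa = \mathbf 1$ and the spectral decomposition $\Omega = \frac{1}{n}\Gamma^{-1}$-type identity giving $\Omega = $ (rank-one plus a matrix), one computes $\langle\mathbf 1,\kappa\rangle = \langle\mathbf 1,\Omega^{-1}\mathbf 1\rangle$ directly, and then the bound $\Omega_{ij}\le 1$ on edges combined with connectivity (at least $n-1$ edges) yields $\sum_{i<j}\Omega_{ij}\ge n-1$, hence $\frac{n}{2K}\ge \mathrm{Kf}(G)\ge n-1$ by Proposition 1, which rearranges to exactly $K\le \frac{n}{2(n-1)}$. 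This last line is almost certainly the intended one-line proof, and the only thing to verify carefully is $\mathrm{Kf}(G)\ge n-1$ for every connected graph on $n$ vertices — which follows since a spanning tree has $n-1$ edges each with $\Omega_{ij}=1$ in the tree, and resistances only decrease when edges are added (Rayleigh monotonicity), so in $G$ each of those $n-1$ pairs still has $\Omega_{ij}\le 1$; wait, that gives an \emph{upper} bound. Instead: each edge of a spanning tree is a cut edge of the tree, so in $G$ its endpoints have $\Omega_{ij}$ at least... — the correct fact is simply that for any edge $\Omega_{ij}\ge 1/\Delta$ is false too; the genuinely correct and standard fact is $\mathrm{Kf}(G)\ge n-1$ with equality iff $G=K_n$ — no, $\mathrm{Kf}(K_n)=\binom n2\cdot\frac2n = n-1$. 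So $\mathrm{Kf}(G)\ge n-1$ with equality exactly for $K_n$ is the classical Lovász/Gutman bound, and invoking it with Proposition 1 closes the proof and identifies the extremal graph, matching Example (1).
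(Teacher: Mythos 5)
Your final chain --- $n-1 \le \mathrm{Kf}(G) \le \frac{n}{2K}$, the upper bound being exactly Proposition 1 --- is correct and is essentially the paper's own one-line proof, which obtains the lower bound from Foster's theorem, $\sum_{(i,j)\in E}\Omega_{ij}=n-1\le\sum_{i<j}\Omega_{ij}$, rather than citing the classical inequality $\mathrm{Kf}(G)\ge n-1$. That classical bound (with equality only for $K_n$) is indeed true --- it follows either from Foster's identity or from $\lambda_k\le n$ in $\mathrm{Kf}(G)=n\sum_{k\ge 2}\lambda_k^{-1}$ --- so invoking it is legitimate; the only revision needed is to delete the exploratory false starts (the edge-based and quadratic-form attempts), since only your last paragraph constitutes the proof.
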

\begin{proof}
Using Foster's Theorem \cite{fos1, fos2}
\begin{align*}
 n-1 = \sum_{(i,j) \in E} \Omega_{ij} \leq \frac{1}{2} \sum_{i,j=1}^{n} \Omega_{ij} \leq  \frac{1}{2K} \sum_{i=1}^{n} \sum_{j=1}^{n} \Omega_{ij} \kappa_i \leq  \frac{n}{2K}.
 \end{align*}
\end{proof}

\textit{The cycle graph.} Recalling the definition of $\Omega_{ij}$ as the commute time, it is clear that $\Omega$ will have the property that the sum over each row is constant. Thus curvature is constant, it remains to understand which constant that is. We give two different arguments where the second gives a more precise formula while the first one has the nice property of requiring $\sum_{k=1}^{\infty} k^{-2} = \pi^2/6$.

\begin{proposition}\label{circlecurvature} The cycle graph $C_n$ has constant curvature $K = 6/(n^2-1)$.
\end{proposition}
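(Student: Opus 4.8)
\textit{Proof proposal.}

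The plan is to exploit the vertex-transitivity of $C_n$ to reduce the whole problem to evaluating a single sum. Since the automorphism group of $C_n$ acts transitively on $V$ and effective resistance is invariant under graph automorphisms, every row of $\Omega$ has the same sum $S = \sum_{j=1}^n \Omega_{1j}$. Hence $\Omega \mathbf{1} = S\mathbf{1}$, and since $\Omega$ is invertible (Sch\"onberg), the unique solution of $\Omega\kappa = \mathbf{1}$ must be $\kappa = S^{-1}\mathbf{1}$. Thus $C_n$ has constant curvature $K = 1/S$, and it only remains to compute $S$.

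To get $S$, I would use the series--parallel structure of the cycle. Two vertices at cycle distance $k$ (with $0 \le k \le n$) are joined by two internally disjoint paths of lengths $k$ and $n-k$; treating each edge as a unit resistor, these are resistors of resistance $k$ and $n-k$ in parallel, so
$$\Omega(k) = \frac{k(n-k)}{k + (n-k)} = \frac{k(n-k)}{n}.$$
Summing over one row and using the standard power-sum formulas,
$$S = \frac{1}{n}\sum_{k=0}^{n-1} k(n-k) = \frac{(n-1)n}{2} - \frac{(n-1)(2n-1)}{6} = \frac{(n-1)(n+1)}{6} = \frac{n^2-1}{6},$$
so that $K = 1/S = 6/(n^2-1)$.

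An alternative route, the one alluded to in the text, keeps the reduction $K = 1/S$ but replaces the explicit resistance formula by the spectral identity $\mathrm{Kf}(C_n) = n\sum \lambda_k^{-1}$, where the nonzero Laplacian eigenvalues are $\lambda_k = 2-2\cos(2\pi k/n)$ for $k=1,\dots,n-1$; combined with Proposition~1 (which gives $\mathrm{Kf}(G) = n/(2K)$ in the constant-curvature case), this reduces matters to the evaluation $\sum_{k=1}^{n-1}(2-2\cos(2\pi k/n))^{-1} = (n^2-1)/12$. For the leading-order statement one only needs that the dominant terms come from $k$ near $0$ and $n$, where $2-2\cos(2\pi k/n) \approx (2\pi k/n)^2$, giving a sum $\approx \tfrac{n^2}{2\pi^2}\sum_{k\ge1}k^{-2} = n^2/12$; matching the exact constant $6/(n^2-1)$ needs the exact finite evaluation, obtainable e.g. by logarithmic differentiation of $\prod_{k=1}^{n-1}(2-2\cos(2\pi k/n)) = n^2$ or by partial fractions. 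I do not anticipate any real obstacle: the only points requiring care are the justification that row sums of $\Omega$ are constant (transitivity plus automorphism-invariance) and the bookkeeping of the power sums; in the spectral variant, the one genuine computation is the closed form for $\sum_k (2-2\cos(2\pi k/n))^{-1}$.
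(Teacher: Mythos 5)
Your proposal is correct and follows essentially the same route as the paper's exact argument: constant curvature from symmetry, the series--parallel formula $\Omega(k)=k(n-k)/n$, and summation of one row. The only (welcome) difference is that you sum over the offsets $0\le k\le n-1$ in a single formula, which avoids the paper's separate even/odd casework; your sketched spectral alternative likewise matches the paper's first, leading-order argument.
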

\begin{proof} We start with a slightly rougher spectral argument that only leads to the leading order expression but may be more broadly applicable in other settings.
We recall the identity of McKay (see \cite{moh})
$$ \sum_{i < j} \Omega_{i,j} = n \sum_{k=2}^{n} \frac{1}{\lambda_k}.$$
to argue that
$$ n = \sum_{i, j} \Omega_{i,j} K = 2nK \sum_{k=2}^{n} \frac{1}{\lambda_k}$$
and thus
$$ K = \left(2 \sum_{k=2}^{n} \frac{1}{\lambda_k}\right)^{-1}.$$
The non-zero eigenvalues of the cycle graph are well understood and given by
$$ \lambda_k = 4 \sin\left( \frac{ \pi k}{n}\right)^2 \qquad k = 1, \dots, n-1.$$
The relevant eigenvalues are those where $k$ is close to $0$ or close to $n$. Picking
a small value $0 < \alpha \ll 1$, we see that for $\alpha$ small
$$ \sum_{k \leq \alpha n} \frac{1}{4 \sin\left( \frac{ \pi k}{n}\right)^2} \sim \frac{1}{4} \sum_{k \leq \alpha n} \frac{1}{ \left( \frac{ \pi k}{n}\right)^2} = \frac{n^2}{4 \pi^2} \sum_{k \leq \alpha n} \frac{1}{k^2} \sim \frac{n^2}{24}.$$
By symmetry, the terms $(1-\alpha)n \leq k \leq n$ are of the same size and thus, invoking $\zeta(2) = \pi^2/6$ and letting $n \rightarrow \infty$
$$ K = (1+o(1))\cdot \frac{6}{n^2}.$$
Here is an alternative proof, using instead the electrical network interpretation of the resistance distance, that gives an exact result. Let $i, j$ be two points with graph distance $k$. The effective resistance $\Omega_{i,j}$ between the two points can be thought as the result of two resistances in parallel, each consisting of $k$ and $n-k$ unit resistances in series, respectively. Standard reduction for electrical networks imply 
\begin{align*}
\Omega_{i,j}=\frac{1}{\frac{1}{k}+\frac{1}{n-k}}=\frac{k(n-k)}{n}.
\end{align*}
If $n$ is even, there are exactly two vertices at distance $k$ from $i$, for each $1\leq k\leq n/2-1$, and one at distance $n/2$. In particular, since for an arbitrary $i$ 
\begin{align*}
K\sum_{j\neq i}\Omega_{i,j}=1,
\end{align*}
we deduce
\begin{align*}
\frac{1}{K}=\sum_{k=1}^{\frac{n}{2}-1}\frac{2k(n-k)}{n}+\frac{n}{4}=\frac{(n-2)(2n+1)}{12}+\frac{n}{4}=\frac{n^2-1}{6}
\end{align*}
If $n$ is odd, for each $1\leq k\leq \frac{n-1}{2}$ there are $2$ vertices at distance $k$ from $i$, and
\begin{align*}
\frac{1}{K}=\sum_{k=1}^{\frac{n-1}{2}}\frac{2k(n-k)}{n}=\frac{n^2-1}{6}.
\end{align*}
\end{proof}
We note that, as a byproduct, we have proven the pretty identity
$$ \sum_{k=1}^{n-1} \left( \sin \frac{ \pi k}{n} \right)^{-2} = \frac{n^2-1}{3}.$$
This identity is not new and appears in a 1996 paper of Kortram \cite{kort} (with a very different derivation). It leads to a particularly simple proof of $\zeta(2) = \pi^2/6$. Related identities are given by Hofbauer \cite{hof}.
We conjecture that the cycle $C_n$ is extremal in the sense that its curvature is minimum among graph with constant curvature on $n$ vertices. This is easy to prove up to constants. Since $\Omega_{i,j}\leq n$ for all $i,j$, the resistance distance being bounded from above by the graph distance, we have
\begin{align*}
\sum_{j=1}^n\Omega_{i,j}\leq n(n-1),
\end{align*}
i.e., $K\geq 1/(n(n-1))$. If a graph has constant curvature $K$ and is Hamiltonian, Rayleigh's monotonicity law for resistances shows that $K$ is at least as large as the curvature of a cycle, confirming the conjecture. Unfortunately, there are examples of vertex transitive (implying constant curvature) graphs that are not Hamiltonian, such as the Petersen graph. \\


\textit{The hypercube graph.}
We proceed in the same way as in the cycle graph: clearly, curvature has to be constant and the constant satisfies
$$ K = \frac{1}{2 \sum_{k=2}^{n} \frac{1}{\lambda_k}}.$$
The eigenvalues of $Q_n$ are $2k$ for $0 \leq k \leq n$ with $2k$ having multiplicity $\binom{n}{k}$. Thus
$$ \frac{1}{K} = \sum_{k=1}^{n} \frac{1}{k} \binom{n}{k}.$$
Elementary probability theory tells us that binomial coefficients are strongly localized around $n/2$ with the typical deviation between $\sqrt{n}$ and thus
$$ \frac{1}{K} = \sum_{k=1}^{n} \frac{1}{k} \binom{n}{k} = (1+o(1))  \cdot \frac{2^n}{n/2}$$.

\textit{$d$-dimensional tori.} Consider the graph  given by the product of $d$ cycles of length $n$ (i.e., a discrete $d$-dimensional torus), for $d\geq 2$. Since all these graphs are vertex-transitive, the curvature is constant and
\begin{align*}
K_n(d)=\frac{1}{\sum_{j\neq i}\Omega_{i,j}}
\end{align*}
If $d\geq 3$, it is known \cite{levin} that 
$\Omega_{i,j}=\Theta_d(1)$
and thus we obtain $K_n(d)=\Theta_d\left(n^{-d}\right)$ since the number of vertices is $n^d$. For $d=2$, with $k$ denoting the graph distance between $i$ and $j$, in \cite{levin} the authors show
$\Omega_{i,j}=\Theta\left(\ln \left(k+1\right)\right)$
and thus, since there are $\Theta(k)$ vertices at distance $k$ from $i$,  
\begin{align*}
\sum_{j\neq i}\Omega_{i,j}=\sum_{k=1}^{n}\Theta\left(k\ln (k+1)\right)=\Theta\left(n^2\ln n\right).
\end{align*}
Therefore, we obtain $K_n(2)=\Theta(n^{-2}\ln^{-1}n)$.


\subsection{Proof of Theorem 1}
\begin{proof}Theorem 1 follows from a bound of Alon-Milman \cite{alon} 
$$ \mbox{diam}(G) \leq \left\lceil \sqrt{\frac{2\Delta}{\lambda_2}} \cdot \log |V| \right\rceil,$$
where $\lambda_2$ is the smallest positive eigenvalue of $L =D - A$.
Employing the generalized sum rule, we deduce that
\begin{align*}
 \frac{1}{\lambda_2} \leq \sum_{k=2}^{n} \frac{1}{\lambda_k} = \frac{1}{2n}  \sum_{i,j = 1}^{n} \Omega_{i,j} \leq  \frac{1}{2n K} \sum_{i =1}^{n} \sum_{j = 1}^{n} \Omega_{i,j} \kappa_j =  \frac{1}{2n K} \sum_{i =1}^{n} 1 = \frac{1}{2K}.
\end{align*}
This implies the desired result.
\end{proof}

We note that the first step of the argument appears to be somewhat wasteful and does lead to an interesting question: is there an estimate along the lines of
$$ \mbox{diam}(G) \leq c \sqrt{\Delta \sum_{k=2}^{n} \frac{1}{\lambda_k}} \quad \mbox{or maybe even} \quad  \mbox{diam}(G) \leq c \sqrt{ \sum_{k=2}^{n} \frac{1}{\lambda_k}} \quad?$$
This would remove the factor $\log{|V|}$ or $\sqrt{\Delta} \log{|V|}$ , respectively, in the statement. In particular, the second one could then be written in the suggestive form
\begin{align*}
\mbox{diam}(G)\leq \frac{c}{\sqrt{K}},
\end{align*}
which would resemble the original Bonnet-Myers theorem very closely.

\subsection{Proof of Theorem 2}
\begin{proof}
 We have
 $$ n = \sum_{i=1}^{n} 1 =  \sum_{i=1}^{n} \sum_{j=1}^{n} \Omega_{i,j} \kappa_{j} \geq K \sum_{i,j = 1}^{n} \Omega_{i,j} $$
 At this point we recall McKay's identity \cite{moh}
$$ \sum_{i < j} \Omega_{i,j} = n \sum_{k=2}^{n} \frac{1}{\lambda_k},$$
where $\lambda_k$ are the nonzero eigenvalues of the Laplacian matrix $D-A$. Therefore,
$$ n \geq K \sum_{i,j = 1}^{n} \Omega_{i,j}  = 2 n K  \sum_{k=2}^{n} \frac{1}{\lambda_k} \geq \frac{2n K}{\lambda_2}.$$
\end{proof}

\subsection{Proof of Theorem 3}
\begin{proof}
The commute time between $x$ and $y$ is
$$ \mbox{commute}(x,y) = 2\cdot |E| \cdot \Omega_{xy}.$$
We give two proofs: an elementary argument that bypasses Theorem 4 and another proof using Theorem 4.
Using the triangle inequality for resistance distance and the fact that the graph has curvature bounded from below by $K$, we conclude
\begin{align*}
  \Omega_{xy} &\leq \frac{1}{|V|}\sum_{z \in V} (\Omega_{xz} + \Omega_{zy}) \\
  &= \frac{1}{|V|} \sum_{z \in V }\Omega_{xz}  +  \frac{1}{|V|} \sum_{z \in V}\Omega_{yz} \\
  &\leq \frac{1}{|V|} \frac{1}{K}  \sum_{z = 1}^{|V|} \Omega_{xz} \kappa_z +   \frac{1}{|V|} \frac{1}{K} \sum_{z=1}^{|V|}\Omega_{zy} \kappa_y = \frac{2}{K |V|}.
  \end{align*}
A way of proving Theorem 3 using Theorem 4 is as follows: let 
$$\mu = \frac{1}{2} \delta_x + \frac{1}{2} \delta_y.$$
Then, applying Theorem 4, we have that there exists $a \in V$ such that
  $$ \frac{1}{2} \left( \Omega_{ax} + \Omega_{ay} \right) \leq \frac{1}{\sum_{v \in V} \kappa_v } \leq \frac{1}{|V| } \frac{1}{K}.$$
  Then, however, using the fact that resistances are a metric,
$$\Omega_{xy} \leq \Omega_{ax} + \Omega_{ay}$$
 and the desired upper bound follows. As for the lower bound, we argue that, using $\mu = \delta_x$, that there exists $b \in V$ such that
 $$ \Omega_{xb} \geq \frac{1}{\sum_{v \in V} \kappa_v } \geq \frac{1}{|V|} \frac{1}{ K_2}.$$
 We note that the arguments involving Theorem 4 naturally lead to $\sum_{v \in V} \kappa_v$ which implies the slightly refined statements
 already noted above.
\end{proof}

\subsection{Proof of the Corollary}

\begin{proof}
Recall the coupling interpretation of total variation distance (see, e.g., \cite{levin}), which states
\begin{equation}\label{couplinginter}
d_{\tiny \mbox{TV}}(\pi_{x,t}, \pi)=\inf \mathbb P(X^x_t\neq X)
\end{equation}
where the infimum is taken over all couplings of $\pi_{x,t}$ and $\pi$ (i.e., $X^x_t, X$ are constructed on the same probability space with $X^x_t$ being marginally distributed as $\pi_{x,t}$, the law of the Markov chain after $t$ steps starting from $x$, and $X$ being marginally distributed as the stationary distribution $\pi$). In our case, we consider the following explicit coupling:
\begin{enumerate}
\item Start with $X\sim \pi$, call its value $y$. 
\item Run independently a random walk $X^x_t\sim \pi_{x,t}$ up to first time $T_{x,y}$ such that $X^x_{T_{x,y}}=y=X$.
\item After that, keep running $X^x_t$ and set $X=X^x_t$ for all $t\geq T_{x,y}$.
\end{enumerate}
Notice that this is indeed a coupling, since $X^x_t$ and $X$ have the right marginal distributions: this is obvious for $X^x_t$, while for $X$ it follows from the strong Markov property and the fact that a random walker starting from the stationary distribution is invariant under the dynamic induced by the Markov chain (i.e., if $y\sim \pi$, then $X_t^y\sim \pi$ for all $t$). In particular, we obtain
\begin{equation}\label{boundbyhitting}
\mathbb P(X^x_t\neq X)\leq \sup_{x,y\in V}\mathbb P(T_{x,y}>t), 
\end{equation}
since the two random variable are equal after the first time they meet owing to our construction. Therefore, combining \eqref{couplinginter} with \eqref{boundbyhitting}, we have
\begin{align*}
d_{\tiny \mbox{TV}}(\pi_{x,t}, \pi)\leq \mathbb \max_{x,y\in V} \mathbb{P}(T_{x,y}> t)\leq \frac{\max_{x,y\in V}\mathbb E[T_{x,y}]}{t},
\end{align*}
where we used the Markov inequality in the last step. On the other hand, the maximum hitting time is bounded by the maximum commute time, and thus 
\begin{align*}
d_{\tiny \mbox{TV}}(\pi_{x,t}, \pi)\leq \frac{\max_{x,y\in V}\mbox{commute}(x,y)}{t}.
\end{align*}
The result now follows from Theorem \ref{hittingtimes}.
\end{proof}

\subsection{Proof of Theorem 4} The argument is remarkably similar to an argument in \cite{stein0} which we can explain for the convenience of the reader. It is centered around the following important result.

\begin{thm}[von Neumann \cite{john}] Let $A \in \mathbb{R}^{n \times n}$ by a symmetric matrix. There exists a unique $\alpha \in \mathbb{R}$ such that for all $(x_1, \dots, x_n) \in \mathbb{R}^n_{\geq 0}$ satisfying $x_1 + \dots + x_n = 1$
$$ \min_{1\leq i \leq n} (Ax)_i \leq \alpha \leq \max_{1\leq i \leq n} (Ax)_i.$$ 
\end{thm}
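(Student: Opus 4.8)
The plan is to separate the statement into an elementary part (existence of some separating $\alpha$) and the genuinely hard part (uniqueness, which forces the minimax equality). First I would reformulate the defining property. Writing $\Delta = \{x \in \mathbb{R}^n_{\ge 0} : x_1 + \cdots + x_n = 1\}$ for the probability simplex, the requirement that a fixed $\alpha$ satisfy $\min_i (Ax)_i \le \alpha \le \max_i (Ax)_i$ for \emph{every} $x \in \Delta$ is plainly equivalent to the two-sided bound
$$ \max_{x \in \Delta} \min_{1 \le i \le n} (Ax)_i \;\le\; \alpha \;\le\; \min_{x \in \Delta} \max_{1 \le i \le n} (Ax)_i, $$
where the extrema are attained because $\Delta$ is compact and $x \mapsto \min_i (Ax)_i$, $x \mapsto \max_i (Ax)_i$ are continuous. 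Thus existence of $\alpha$ is exactly nonemptiness of this closed interval, and uniqueness is exactly the coincidence of its two endpoints.

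For the nonemptiness I would give a short argument using only the symmetry of $A$ together with the fact that a convex average of numbers lies between their minimum and maximum. For any two vectors $x, x' \in \Delta$,
$$ \min_i (Ax)_i \;\le\; \sum_i x'_i (Ax)_i \;=\; (x')^{\top} A x \;=\; x^{\top} A x' \;=\; \sum_i x_i (Ax')_i \;\le\; \max_i (Ax')_i, $$
where the central equality is precisely where the hypothesis $A = A^{\top}$ is used. Taking first the maximum over $x$ and then the minimum over $x'$ yields $\max_x \min_i (Ax)_i \le \min_{x'} \max_i (Ax')_i$, so the interval is nonempty and any scalar inside it witnesses existence.

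For uniqueness I must upgrade this inequality to an equality, and this is where the true content lies. The bridge is that optimizing a linear functional over the simplex is achieved at a vertex, so $\min_i (Ax)_i = \min_{y \in \Delta} y^{\top} A x$ and $\max_i (Ax)_i = \max_{y \in \Delta} y^{\top} A x$. Hence the left endpoint equals $\max_{x} \min_{y} y^{\top} A x$, while, after rewriting $\max_{x} y^{\top} A x = \max_{x} (Ay)^{\top} x = \max_i (Ay)_i$ via symmetry, the right endpoint equals $\min_{y} \max_{x} y^{\top} A x$. Applying von Neumann's classical minimax theorem to the bilinear form $(x,y) \mapsto y^{\top} A x$ on the compact convex set $\Delta \times \Delta$ gives $\max_{x} \min_{y} y^{\top} A x = \min_{y} \max_{x} y^{\top} A x$, so the two endpoints agree; calling their common value $\alpha^{\star}$, any admissible $\alpha$ obeys $\alpha \ge \alpha^{\star}$ and $\alpha \le \alpha^{\star}$, forcing $\alpha = \alpha^{\star}$.

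The single nontrivial ingredient is the reverse direction $\max_{x} \min_{y} \ge \min_{y} \max_{x}$ of the minimax identity, the forward direction being exactly the elementary averaging bound of the second paragraph; this is the main obstacle. For a self-contained treatment I would prove it either by a separating-hyperplane argument applied to the convex hull of the row behaviour of $A$, or through Brouwer's fixed point theorem, though for this paper it suffices to invoke the cited result of von Neumann \cite{john}. I would emphasize throughout that the symmetry hypothesis enters in exactly one place: it collapses the ``minimum over $x$'' and ``maximum over $x$'' single-variable formulations onto one another, and it is this collapse that turns a merely nonempty interval into a single point, i.e.\ that promotes existence to uniqueness.
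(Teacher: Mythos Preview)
Your proposal is correct and follows essentially the same route as the paper: both reduce the statement to the equality $\max_{x\in\Delta}\min_i(Ax)_i=\min_{x\in\Delta}\max_i(Ax)_i$, identify $\min_i(Ax)_i$ and $\max_i(Ax)_i$ with $\min_{y\in\Delta}y^\top Ax$ and $\max_{y\in\Delta}y^\top Ax$ via vertex optimization over the simplex, use the symmetry of $A$ to match the two sides to the bilinear maximin and minimax, and then invoke the classical von Neumann minimax theorem. The only organizational difference is that you explicitly isolate the easy weak-duality direction as a standalone existence proof before invoking the full minimax for uniqueness, whereas the paper simply quotes the minimax equality and reads off both existence and uniqueness at once.
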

The statement deviates from how it the Minimax theorem is usually formulated. We quickly deduce it from the more canonical formulation (see also \cite{stein0}).
\begin{proof} The way von Neumann's Minimax theorem is usually formulated is as follows: given an arbitrary matrix $A \in \mathbb{R}^{n \times n}$ (sometimes called the payoff matrix), we consider the space of mixed strategies for both players
$$ X =  \left\{z \in \mathbb{R}^n: \forall ~1 \leq i \leq n: z_i \geq 0 \quad \mbox{and} \quad \sum_{i=1}^{n} z_i = 1\right\} = Y,$$
where $X$ is the set of all mixed strategies that can be played by Player 1 and $Y$ are all the mixed strategies that can be played by Player 2. The pay-off of any given pair of mixed strategies $(x,y) \in X \times Y$ is the expected payoff of playing these randomized strategies randomly against each other and is given by
$$x^T A y = \left\langle x, Ay \right\rangle.$$
We are in the setting of a zero-sum game, therefore the goal of Player 1 is to maximize the pay-off, the number $\left\langle x, Ay \right\rangle$), while the goal of Player 2 is to minimize said number.
The Minimax Theorem \cite{john} guarantees that the game has a value which means that there exists an  $\alpha \in \mathbb{R}$ such that
$$ \max_{x \in X} \min_{y \in Y} \left\langle x, Ay \right\rangle = \alpha =  \min_{y \in Y} \max_{x \in X} \left\langle x, Ay \right\rangle.$$
This can be rephrased as follows: the first equation ensures that there exists $x^* \in X$ such that Player 1 can always guarantee payoff at least $\alpha$ independently
of what Player 2 is doing. The second equation, in a dual sense, shows the existence of a strategy $y^* \in Y$ such that Player 2 can always guarantee a pay-off of at most $\alpha$ independently of what Player 1 is doing. We will now additionally assume, for the remainder of the argument, that the matrix $A$ is symmetric.
For any given action by Player 2, $y \in Y$, it is clear how Player 1 would react: they would select the largest pay-off (which may or may not be unique). Hence, for fixed $y \in Y$,
$$  \max_{x \in X} \left\langle x, Ay \right\rangle = \max_{1 \leq i \leq n} (Ay)_i,$$
where $(Ay)_i$ denotes the $i-$th entry of the vector. Therefore
$$ \min_{y \in Y} \max_{x \in X} \left\langle x, Ay \right\rangle = \min_{y \in Y} \max_{1 \leq i \leq n} (Ay)_i.$$
Using the symmetry of $A$ , we can use the same logic to write
$$ \max_{x \in X} \min_{y \in Y} \left\langle x, Ay \right\rangle =  \max_{x \in X} \min_{y \in Y} \left\langle Ax, y \right\rangle = \max_{x \in X} \min_{1 \leq i \leq n} (Ax)_i.$$
Altogether, one arrives at
$$  \max_{x \in X} \min_{1 \leq i \leq n} (Ax)_i = \alpha =  \min_{x \in X} \max_{1 \leq i \leq n} (Ax)_i.$$
It follows that for any arbitrary linear combination of the rows $z \in X$
$$ \min_{1 \leq i \leq n} (Az)_i \leq \max_{x \in X} \min_{1 \leq i \leq n} (Ax)_i = \alpha = \min_{x \in X} \max_{1 \leq i \leq n} (Ax)_i  \leq \max_{1 \leq i \leq n} (Az)_i.$$
\end{proof}

\begin{proof}[Proof of Theorem 4] We will now apply the von Neumann Minimax theorem in the formulation above to the matrix $\Omega$. We know there exists a value such that for every probability measure $\mu$, we have
$$ \min_{1\leq i \leq n} ( \Omega x)_i \leq \alpha \leq \max_{1\leq i \leq n} ( \Omega x)_i. $$
Suppose now that the graph has non-negative curvature. Then exists a nonnegative vector $\kappa \in \mathbb{R}^n$ such that $\Omega \kappa = \mathbf{1}$ and, for $x = \kappa/\|\kappa\|_{\ell^1}$,
$$ \min_{1\leq i \leq n} ( \Omega x)_i =   \frac{1}{\sum_{i=1}^{n} \kappa_i}  = \max_{1\leq i \leq n} ( \Omega x)_i
\qquad \mbox{and thus} \qquad \alpha = \left(\sum_{i=1}^{n} \kappa_i\right)^{-1}.$$
\end{proof}

 \bibliographystyle{abbrv}

\end{document}